\documentclass[11pt]{article}
\usepackage{amsmath,amssymb,amsthm,geometry,hyperref,booktabs}
\newtheorem{definition}{Definition}[section]
\newtheorem{theorem}{Theorem}[section]
\newtheorem{lemma}[theorem]{Lemma}
\newtheorem{corollary}[theorem]{Corollary}
\newtheorem{proposition}[theorem]{Proposition}
\newtheorem{remark}[theorem]{Remark}
\newtheorem{example}[theorem]{Example}

\title{The Domination Reciprocal Mean Square Index}
\author{
  Meysam Taheri-Dehkordi \\[1mm]
  \small University of Applied Science and Technology, Tehran, Iran \\
  \small \texttt{m.taheri@uast.ac.ir}
}
\date{}

\begin{document}
\maketitle

\begin{abstract}
We introduce the Domination Reciprocal Mean Square index \(DRMS\), a
new domination-degree-based topological index obtained by replacing the
ordinary degree in the reciprocal mean square index with the domination
degree. We establish elementary bounds in terms of graph size and extremal
domination degrees, derive a one-parameter family of sharp lower bounds via
the power mean, and obtain sharp bounds in terms of several well-known
domination-degree-based indices, with a full characterization of the equality
cases. We also compute closed-form expressions for \(DRMS\) on several
standard graph families and for the corona product of two graphs.\\
\noindent\textbf{Keywords:} domination degree, domination topological index,
reciprocal mean square index, topological indices.
\end{abstract}

\section{Introduction and Motivation}
Throughout, \( G \) denotes a simple connected graph with vertex set \( V(G) \) and edge set \( E(G) \). Its order and size are \( n = |V(G)| \) and \( m = |E(G)| \), respectively. By the degree of a vertex \( u \), denoted \( d(u) \), we mean the number of its neighbors in \( G \). Any graph-theoretic terminology or notation not defined in this work may be found in \cite{bondy, char, diestel}.\\
Numerical graph invariants known as topological indices are of key importance in mathematical chemistry. Their indispensability in chemical graph theory has been established through their widespread adoption in QSAR/QSPR modeling. When the definition of such an index involves only vertex degrees, it is classified as a vertex-degree-based index \cite{Gutman2013, furtula2013, cao2014}. \\
 For a graph \( G \), a subset \( D \subseteq V(G) \) is called a dominating set if each vertex of \( G \) lies in \( D \) or has a neighbor in \( D \). The domination degree of a vertex $u \in V(G)$ is the number of minimal dominating sets of $G$ containing $u$. This quantity is denoted by $d_d(u)$. The symbols $\Delta_d(G)=\Delta_d$ and $\delta_d(G)=\delta_d$ denote the largest and smallest domination degrees among the vertices of a graph $G$, respectively. On the other hand, 
\[
\delta_d=\min_{u\in V(G)}d_d(u),\qquad
\Delta_d=\max_{u\in V(G)}d_d(u).\qquad
\]
The graph \( G \) is \( k \)-domination regular whenever \( d_d(u) = k \) for all \( u \in V(G) \).
Recently, several topological indices based on domination degrees have been
introduced. In particular, Hanan Ahmed et al.~\cite{Hanan} introduced the first
family of such indices, which has since been extended in various directions. Domination topological indices are chiefly intended to apply graph domination concepts in order to obtain numerical values for chemical graphs. References \cite{Ahmed1, barman2026, HananD, DSOM} offer further details and applications concerning these indices for different families of standard graphs and graph operations.
The Reciprocal Mean Square $(RMS)$ index of a graph is defined as
\[
RMS(G)=\sum_{uv\in E(G)}\frac{1}{d(u)^2+d(v)^2},
\]
The reciprocal mean square index was formally proposed and subjected to a systematic investigation in \cite{AliRMS1, AliRMS2}.
Replacing the ordinary degree $d(u)$ by the domination degree $d_d(u)$,
we obtain the Domination Reciprocal Mean Square index $(DRMS)$.\\
Accordingly, in this paper we introduce a new index called the Domination Reciprocal Mean Square index (\(DRMS\)), which is obtained by replacing the ordinary degree with the domination degree in the \(RMS\) index. The main purpose of this work is to study this index systematically from both theoretical and computational perspectives. First, we establish elementary bounds in terms of the graph size and the extremal domination degrees (\(\delta_d\) and \(\Delta_d\)), and characterize their equality cases. Then, we derive a one-parameter family of sharp lower bounds via the power mean \(M_p(x)\), which unifies several classical bounds. Next, we obtain sharp bounds in terms of well-known domination-degree-based indices, including the modified forgotten domination index, the second domination Zagreb index, the first domination Banhatti index, the domination Sombor index, and the domination elliptic Sombor index, with a full characterization of the equality cases. Moreover, we compute closed-form expressions for \(DRMS\) on several standard graph families as well as for the corona product of two graphs. 

\section{Preliminaries}
This section collects the main definitions and auxiliary results used
throughout the paper. We first introduce the Domination Reciprocal Mean Square
index \(DRMS\), together with the domination-degree-based indices that will
appear in the subsequent bounds. We then recall the notions of a (minimal)
dominating set and of a private neighbour, and prove two elementary lemmas
that will be used repeatedly.

\begin{definition}
For a connected graph $G$,
\[
DRMS(G)=\sum_{uv\in E(G)}\frac{1}{d_d^2(u)+d_d^2(v)}.
\]
\end{definition}
Throughout, we use the following notations for every edge $e=uv$,
\[
a_e=d_d(u),\quad b_e=d_d(v),\quad
x_e=d_d^2(u)+d_d^2(v),\quad
\sigma_e=d_d(u)+d_d(v).
\]
The following domination-degree-based indices will be used repeatedly.

\begin{definition}[Modified Forgotten Domination Index \cite{HananD}]
\[
DF^*(G)=\sum_{uv\in E(G)}\bigl(d_d^2(u)+d_d^2(v)\bigr)=\sum_{e\in E(G)}x_e.
\]
\end{definition}

\begin{definition}[Second Domination Zagreb Index \cite{Hanan}]
\[
DM_2(G)=\sum_{uv\in E(G)}d_d(u)\,d_d(v).
\]
\end{definition}

\begin{definition}[First Domination Banhatti Index \cite{kulli2023}] \label{2.4}
\[
DB_1(G)=\sum_{uv\in E(G)}\frac{d_d(u)+d_d(v)}{2}=\sum_{e\in E(G)}\frac{\sigma_e}{2}.
\]
\end{definition}

\begin{definition}[Hyper Domination Index \cite{HananD}]
\label{def:DH}
\[
DH(G)=\sum_{uv\in E(G)}\bigl(d_d(u)+d_d(v)\bigr)^2=\sum_{uv\in E(G)}\bigl(\sigma_e\bigr)^2.
\]
\end{definition}

\begin{definition}[Domination Elliptic Sombor Index \cite{BD26}]
\label{def:DHar}
\[
DESO(G)=\sum_{uv\in E(G)}(d_d(u)+d_d(v))\sqrt{d_d^2(u)+d_d^2(v)}=\sum_e\sigma_e\sqrt{x_e}.
\]
\end{definition}

\begin{definition}[Domination Sombor Index \cite{DSOM}]
\[
SO_d(G)=\sum_{uv\in E(G)}\sqrt{d_d^2(u)+d_d^2(v)}=\sum_{e\in E(G)}\sqrt{x_e}.
\]
\end{definition}

\begin{definition}[Modified Domination Sombor Index \cite{kulli2023-2}]
\[
{}^{m}SO_d(G)=\sum_{uv\in E(G)}\frac{1}{\sqrt{d_d^2(u)+d_d^2(v)}}=\sum_{e\in E(G)}x_e^{-1/2}.
\]
\end{definition}

For a vertex $v\in V(G)$, the \emph{closed neighbourhood} of $v$ is
$N[v]=\{v\}\cup\{w\in V(G): vw\in E(G)\}$, and for $S\subseteq V(G)$ we put
$N[S]=\bigcup_{u\in S}N[u]$.

\begin{definition}
A set $S\subseteq V(G)$ is a dominating set of $G$ if $N[S]=V(G)$,
i.e.\ every vertex of $G$ lies in $S$ or is adjacent to a vertex of $S$.
A dominating set $S$ is minimal if no proper subset of $S$ is dominating;
equivalently, $S\setminus\{u\}$ is not dominating for every $u\in S$.
\end{definition}

\begin{definition}
Let $S\subseteq V(G)$ and $u\in S$. A vertex $w\in N[u]$ is called a
private neighbour of $u$ with respect to $S$ if $N[w]\cap S=\{u\}$,
i.e.\ $u$ is the only vertex of $S$ that dominates $w$.
(Note that $w=u$ is allowed.)
\end{definition}

\begin{lemma}
\label{lem:private}
Let $S$ be a dominating set of $G$. Then $S$ is minimal if and only if every
$u\in S$ has a private neighbour.
\end{lemma}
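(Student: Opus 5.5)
The plan is to prove both implications directly from the definitions of minimality and of a private neighbour. The key observation is a reformulation. For $u\in S$, the set $S\setminus\{u\}$ fails to dominate exactly when some vertex $w\in V(G)$ satisfies $N[w]\cap (S\setminus\{u\})=\emptyset$. Since $S$ itself is dominating, such a $w$ must still satisfy $N[w]\cap S\neq\emptyset$, so $N[w]\cap S=\{u\}$. This says precisely that $w$ is a private neighbour of $u$ with respect to $S$. Note that $w\in N[u]$ automatically, because $u\in N[w]$ and adjacency is symmetric.

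\emph{($\Rightarrow$)} Suppose $S$ is minimal and let $u\in S$. By the equivalent form of minimality in the definition, $S\setminus\{u\}$ is not dominating. Pick $w\notin N[S\setminus\{u\}]$; then $N[w]\cap(S\setminus\{u\})=\emptyset$. Because $S$ dominates $G$, $N[w]\cap S\neq\emptyset$, and hence $N[w]\cap S=\{u\}$. So $w$ is a private neighbour of $u$.

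\emph{($\Leftarrow$)} Suppose every $u\in S$ has a private neighbour $w_u$. Fix $u\in S$. Since $N[w_u]\cap S=\{u\}$, we get $N[w_u]\cap(S\setminus\{u\})=\emptyset$, so $w_u$ is not dominated by $S\setminus\{u\}$. Hence $S\setminus\{u\}$ is not dominating for any $u\in S$. This is minimality, by the stated equivalence.

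One point deserves a remark. The definition of minimality lists ``no proper subset is dominating'' and ``no $S\setminus\{u\}$ is dominating'' as equivalent. If I need to justify that equivalence, I use monotonicity: every superset of a dominating set is dominating. So if some proper subset $T\subsetneq S$ were dominating, then for any $u\in S\setminus T$ the superset $S\setminus\{u\}\supseteq T$ would also be dominating. Beyond this, there is no real obstacle; the only care needed is the case $w=u$, which the definition of private neighbour explicitly allows.
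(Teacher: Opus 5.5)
Your proof is correct and follows the paper's argument essentially step for step. In both directions you translate ``$S\setminus\{u\}$ is not dominating'' into the existence of a vertex $w$ with $N[w]\cap S=\{u\}$, using that $S$ itself dominates and that closed neighbourhoods are symmetric. Your added remark justifying, via monotonicity of domination under supersets, the equivalence of the two forms of minimality is a small improvement, since the paper simply builds that equivalence into its definition.
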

\begin{proof}
Let $S$ be a dominating set of $G$. We prove both directions.\\
 Assume $S$ is minimal, and let $u \in S$ be arbitrary. By minimality, $S \setminus \{u\}$ is not dominating. Hence there exists $w \in V(G)$ such that
$
N[w] \cap (S \setminus \{u\}) = \varnothing.
$
Since $S$ is dominating, $N[w] \cap S \neq \varnothing$. Because $S = (S \setminus \{u\}) \cup \{u\}$, we obtain
$
N[w] \cap S = \{u\}.
$\\
In particular $u \in N[w]$, and by symmetry of the closed neighbourhood, $w \in N[u]$. Thus $w$ is a private neighbour of $u$ with respect to $S$. Since $u \in S$ was arbitrary, every vertex of $S$ has a private neighbour.\\
Now, assume every $u \in S$ has a private neighbour. Fix $u \in S$ and let $w \in N[u]$ be a private neighbour of $u$, i.e.
$
N[w] \cap S = \{u\}.
$
Then
$
N[w] \cap (S \setminus \{u\}) = \varnothing,
$
and $w \notin S \setminus \{u\}$ (otherwise $w \in N[w] \cap (S \setminus \{u\})$, a contradiction). Hence $w$ is not dominated by $S \setminus \{u\}$, so $S \setminus \{u\}$ is not dominating. Since $u \in S$ was arbitrary, no proper subset of $S$ obtained by deleting a single vertex is dominating; equivalently, $S$ is minimal.
\end{proof}

\begin{lemma}\label{lem:delta1}
Every vertex of a graph lies in a minimal dominating set; hence
$\delta_d\ge1$.
\end{lemma}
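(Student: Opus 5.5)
The plan is to build, for an arbitrary vertex $v\in V(G)$, a minimal dominating set containing $v$. We start from a maximal independent set that contains $v$ and then check minimality with Lemma~\ref{lem:private}.

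\textbf{Step 1: construct the set.} The set $\{v\}$ is independent. Since $V(G)$ is finite, we may extend it greedily, adding vertices as long as independence is preserved. This yields a maximal independent set $S$ with $v\in S$.

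\textbf{Step 2: $S$ is dominating.} Suppose some $w\notin S$ had no neighbour in $S$. Then $S\cup\{w\}$ would still be independent, contradicting the maximality of $S$. So every vertex outside $S$ has a neighbour in $S$, and $N[S]=V(G)$.

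\textbf{Step 3: $S$ is minimal.} For each $u\in S$ we claim $u$ is its own private neighbour. Indeed, $N[u]\cap S=\{u\}$, because $S$ is independent and so no other vertex of $S$ is adjacent to $u$. By Lemma~\ref{lem:private}, $S$ is a minimal dominating set, and it contains $v$.

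\textbf{Conclusion.} Every vertex therefore lies in at least one minimal dominating set, so $d_d(v)\ge 1$ for all $v$, and hence $\delta_d\ge 1$. I do not expect a genuine obstacle. The only point needing care is the observation that a maximal independent set is automatically dominating, and this is exactly where maximality is used. Finiteness of $G$ guarantees that the greedy extension terminates, and the argument does not require connectedness.
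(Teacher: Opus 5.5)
Your proof is correct and takes essentially the same approach as the paper: you extend $\{v\}$ greedily to a maximal independent set, use maximality to show it dominates, and use independence to show each $u$ is its own private neighbour. The only cosmetic difference is that you cite Lemma~\ref{lem:private} explicitly, whereas the paper checks directly that $I\setminus\{u\}$ fails to dominate $u$.
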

\begin{proof}
Let $v \in V(G)$ be arbitrary.\\
Step 1. Extend the independent set $\{v\}$ to a maximal
independent set $I$ (possible by a greedy argument). Then $v \in I$.\\
Step 2. We claim that $I$ is a dominating set. Suppose
otherwise: there exists $x \in V(G)$ such that $x \notin I$ and $x$ is
adjacent to no vertex of $I$. Then $I \cup \{x\}$ is also an independent
set, contradicting the maximality of $I$. Hence $I$ is dominating.\\
Step 3. We claim that $I$ is a \emph{minimal}
dominating set. Let $u \in I$. Since $I$ is independent, no vertex of
$I \setminus \{u\}$ is equal or adjacent to $u$; in particular,
$u \notin I \setminus \{u\}$ and $N[u] \cap (I \setminus \{u\}) = \varnothing$.
Hence $I \setminus \{u\}$ does not dominate $u$. Since $u \in I$ was
arbitrary, $I$ is minimal.\\
By Steps 1--3, $I$ is a minimal dominating
set containing $v$. Since $v \in V(G)$ was arbitrary, every vertex of
$G$ lies in a minimal dominating set, which implies
\[
\delta_d = \min_{v \in V(G)} d_d(v) \ge 1. \qedhere
\]
\end{proof}

\section{Elementary Bounds and Power Means}
We begin with two elementary bounds for \(DRMS(G)\) in terms of the graph size
\(m\) and the extremal domination degrees \(\delta_d\) and \(\Delta_d\), and we
characterize the graphs for which these bounds are attained. We then introduce
the power mean \(M_p(x)\) of the numbers \(x_e = d_d^2(u) + d_d^2(v)\) and
derive a one-parameter family of lower bounds that unifies several of the
bounds appearing later in the paper.
\begin{theorem}
\label{thm:elementary}
For every connected graph $G$,
\[
\frac{m}{2\Delta_d^2} \le DRMS(G) \le \frac{m}{2\delta_d^2}.
\]
Equality holds in either bound if and only if $G$ is
$k$-domination-regular for some $k \ge 1$; in that case,
$
DRMS(G) = \frac{m}{2k^2}.
$
\end{theorem}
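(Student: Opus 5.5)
The plan is to bound each summand of \(DRMS(G)\) termwise and then sum over the \(m\) edges. Fix an edge \(e=uv\). By definition of the extremal domination degrees, \(\delta_d\le a_e,b_e\le\Delta_d\), and by Lemma~\ref{lem:delta1} we have \(\delta_d\ge1\), so all quantities are positive and the reciprocals are well defined. Hence
\[
2\delta_d^2\le x_e=a_e^2+b_e^2\le 2\Delta_d^2,
\qquad\text{so}\qquad
\frac{1}{2\Delta_d^2}\le\frac{1}{x_e}\le\frac{1}{2\delta_d^2}.
\]
Summing over all \(m\) edges gives both inequalities of Theorem~\ref{thm:elementary} at once.

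For the equality characterization, I would first handle the \emph{if} direction. If \(G\) is \(k\)-domination-regular, then \(\delta_d=\Delta_d=k\), and \(k\ge1\) by Lemma~\ref{lem:delta1}. Every summand then equals \(1/(2k^2)\), so \(DRMS(G)=m/(2k^2)\), which coincides with both bounds.

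For the \emph{only if} direction, consider the lower bound; the upper bound is symmetric. The sum of the nonnegative differences \(1/x_e-1/(2\Delta_d^2)\) vanishes, so each difference is zero. Thus \(x_e=2\Delta_d^2\) for every edge. Since \(a_e,b_e\le\Delta_d\), this forces \(a_e=b_e=\Delta_d\). Therefore every vertex incident to at least one edge has domination degree \(\Delta_d\).

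The one point needing care, and the main (mild) obstacle, is passing from ``every endpoint of an edge'' to ``every vertex''. This is where connectivity enters. If \(n\ge2\), a connected graph has no isolated vertices, so every vertex lies on some edge and hence \(d_d(v)=\Delta_d\) for all \(v\). Then \(G\) is \(\Delta_d\)-domination-regular. If \(n=1\), then \(m=0\) and both bounds hold trivially with equality. In that case the single vertex forms the unique minimal dominating set, so \(G\) is \(1\)-domination-regular and the statement still holds. The upper bound is treated identically, with \(\delta_d\) in place of \(\Delta_d\).
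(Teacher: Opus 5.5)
Your proposal is correct and follows essentially the same route as the paper. Both use the edgewise bounds $2\delta_d^2\le x_e\le 2\Delta_d^2$, sum over the $m$ edges, and obtain the equality case from the vanishing of nonnegative termwise differences, which forces $a_e=b_e=\Delta_d$ (resp.\ $\delta_d$) on every edge before connectedness is invoked; your explicit treatment of $n=1$ (where $m=0$ and $K_1$ is $1$-domination-regular) is a small point the paper leaves implicit.
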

\begin{proof}
For every edge $uv \in E(G)$, since $\delta_d \le d_d(x) \le \Delta_d$ for all $x$,
\begin{equation}\label{eq:basic-ineq}
2\delta_d^2 \le d_d^2(u) + d_d^2(v) \le 2\Delta_d^2.
\end{equation}
\emph{Lower bound.} From the right side of \eqref{eq:basic-ineq},
$d_d^2(u)+d_d^2(v) \le 2\Delta_d^2$ for every edge, so
$\frac{1}{d_d^2(u)+d_d^2(v)} \ge \frac{1}{2\Delta_d^2}$.
Summing over the $m$ edges gives $DRMS(G) \ge \frac{m}{2\Delta_d^2}$.\\
\emph{Upper bound.} Similarly, from the left side of \eqref{eq:basic-ineq},
$\frac{1}{d_d^2(u)+d_d^2(v)} \le \frac{1}{2\delta_d^2}$, and summing gives
$DRMS(G) \le \frac{m}{2\delta_d^2}$.\\
\emph{Equality.} Each summand in $DRMS(G) - \frac{m}{2\Delta_d^2}$ is nonnegative,
so the sum vanishes if and only if every summand vanishes, i.e.\ if and only if
$d_d^2(u)+d_d^2(v) = 2\Delta_d^2$ for every edge $uv$. Since
$d_d^2(x) \le \Delta_d^2$, this forces $d_d(u)=d_d(v)=\Delta_d$ on every edge,
hence $d_d \equiv \Delta_d$ by connectedness. The upper-bound case is symmetric:
equality holds if and only if $d_d \equiv \delta_d$. In either case $d_d\equiv k$, and
$
DRMS(G) = \sum_{uv \in E} \frac{1}{2k^2} = \frac{m}{2k^2}. \qedhere
$
\end{proof}
\begin{corollary}
If $G$ is $k$-domination-regular with $m$ edges, then
\[
DRMS(G)=\frac{m}{2k^2}.
\]
\end{corollary}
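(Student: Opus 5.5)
The corollary follows directly from the definition of $DRMS$; Theorem~\ref{thm:elementary} serves only as a cross-check.

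Assume $G$ is $k$-domination-regular, so $d_d(u)=k$ for every $u\in V(G)$. By Lemma~\ref{lem:delta1}, $k=\delta_d\ge 1$, so every denominator is nonzero. For each edge $e=uv$ we then have $x_e=d_d^2(u)+d_d^2(v)=k^2+k^2=2k^2$. Each summand of $DRMS(G)$ therefore equals $\frac{1}{2k^2}$, and summing over the $m$ edges gives
\[
DRMS(G)=\sum_{uv\in E(G)}\frac{1}{2k^2}=\frac{m}{2k^2}.
\]

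As a check against Theorem~\ref{thm:elementary}: regularity gives $\delta_d=\Delta_d=k$. The lower and upper bounds of that theorem then both equal $\frac{m}{2k^2}$, which forces the same value.

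There is no real obstacle here. The only point to make explicit is that $k\ge1$, which is exactly what Lemma~\ref{lem:delta1} guarantees, so that the expression $\frac{1}{2k^2}$ is well defined.
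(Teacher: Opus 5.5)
Your proof is correct and is essentially the paper's argument: the corollary is read off from Theorem~\ref{thm:elementary}, whose proof ends with exactly your computation that $d_d\equiv k$ makes every summand equal to $\frac{1}{2k^2}$, giving $\frac{m}{2k^2}$. Your cross-check, in which both bounds of that theorem coincide when $\delta_d=\Delta_d=k$, is also valid, and citing Lemma~\ref{lem:delta1} for $k\ge 1$ properly justifies that every denominator is nonzero.
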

\begin{theorem}\label{thm:max}
For every connected graph $G$ of order $n\ge2$,
\[
DRMS(G)\ \le\ \frac m2\ \le\ \frac{n(n-1)}4 .
\]
Equality holds in the first inequality if and only if $d_d\equiv1$, and in the
second if and only if $G=K_n$. In particular,
$\max_G DRMS(G)=n(n-1)/4$ over connected graphs of order $n$, and it is
attained only by $K_n$. Moreover, every tree $T$ of order $n$ satisfies
$DRMS(T)\le (n-1)/2$.
\end{theorem}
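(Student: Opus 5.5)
The plan is to combine Lemma~\ref{lem:delta1} with Theorem~\ref{thm:elementary} for the first inequality, and then to bound $m$ by elementary counting. By Lemma~\ref{lem:delta1}, $\delta_d\ge 1$. Hence every edge $uv$ satisfies $d_d^2(u)+d_d^2(v)\ge 2$, so each summand of $DRMS(G)$ is at most $1/2$. Summing over the $m$ edges gives $DRMS(G)\le m/2$; this is just the upper bound $m/(2\delta_d^2)$ of Theorem~\ref{thm:elementary} weakened using $\delta_d\ge1$.

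For the equality case of the first inequality, note that a summand equals $1/2$ exactly when $d_d(u)=d_d(v)=1$. So $DRMS(G)=m/2$ holds if and only if every edge has both endpoints of domination degree $1$. Since $G$ is connected with $n\ge2$, every vertex lies on an edge, and this condition is equivalent to $d_d\equiv1$.

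The second inequality $m/2\le n(n-1)/4$ is just $m\le\binom n2$, with equality exactly for $K_n$. For the ``in particular'' statement:
\begin{itemize}
\item For $K_n$, the minimal dominating sets are precisely the singletons: any singleton dominates, and a set of two or more vertices is not minimal because a single vertex already dominates.
\item Thus $d_d(v)=1$ for all $v$, and $DRMS(K_n)=\binom n2\cdot\frac12=n(n-1)/4$.
\item Uniqueness follows from the chain: if $DRMS(G)=n(n-1)/4$, both inequalities must be equalities, so in particular $G=K_n$.
\end{itemize}

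For trees, $m=n-1$, so $DRMS(T)\le (n-1)/2$ follows immediately from the first inequality.

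I expect no step to be a real obstacle. The one point needing care is the equality characterization $d_d\equiv1$. Equality forces $d_d=1$ only at endpoints of edges, so I will explicitly invoke connectedness and $n\ge2$ to conclude that there are no isolated vertices. I will also verify concretely that $K_n$ has $d_d\equiv1$, rather than just asserting it.
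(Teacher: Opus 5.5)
Your proposal is correct and follows essentially the same route as the paper's proof. Both bound each summand by $1/2$ using $\delta_d\ge1$ from Lemma~\ref{lem:delta1}, use connectedness to turn equality into $d_d\equiv1$, and use $m\le\binom n2$. Both then check that $d_d\equiv1$ on $K_n$ via the singleton minimal dominating sets, and read off uniqueness and the tree bound from the chain of inequalities.
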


\begin{proof}
By Lemma~\ref{lem:delta1} we have $d_d\ge1$, so $x_e=a_e^{2}+b_e^{2}\ge2$ and
$1/x_e\le1/2$ for every edge $e$, with equality if and only if
$a_e=b_e=1$. Summing over $E(G)$ gives $DRMS(G)\le m/2$, with equality if and
only if $a_e=b_e=1$ for every edge $e$. Since $G$ is connected and $n\ge2$,
every vertex lies on an edge, so this holds if and only if $d_d\equiv1$ (and
if $d_d\equiv1$, then $x_e=2$ for all $e$, so equality indeed holds). Next,
$m\le n(n-1)/2$ with equality if and only if $G=K_n$, which gives the second
inequality and its equality case. Every vertex of $K_n$ lies in exactly one
minimal dominating set (the singletons), so $d_d\equiv1$ on $K_n$ and
$DRMS(K_n)=\frac12\cdot\frac{n(n-1)}2=\frac{n(n-1)}4$. Hence the maximum is
$n(n-1)/4$. If $DRMS(G)=n(n-1)/4$, then both inequalities of the chain are
equalities; in particular $m=n(n-1)/2$, so $G=K_n$. Finally, a tree has
$m=n-1$ edges, so $DRMS(T)\le m/2=(n-1)/2$.
\end{proof}

For a real parameter \(p \neq 0\), the \emph{power mean of order \(p\)} of the
numbers \(x_e = d_d^2(u) + d_d^2(v)\) is defined by
\[
M_p(x) = \left(\frac{1}{m}\sum_{e \in E(G)} x_e^{\,p}\right)^{1/p}.
\]
At \(p = 0\), we set
\[
M_0(x) = \left(\prod_{e \in E(G)} x_e\right)^{1/m},
\]
which is the limit of \(M_p(x)\) as \(p \to 0\).\\
In the next theorem, we construct a lower bound for \(DRMS(G)\) using the
power mean \(M_p(x)\) of the numbers \(x_e = d_d^2(u) + d_d^2(v)\).

\begin{theorem}\label{thm:pmean}
For every connected graph \(G\) and every \(p \in [-1, 1]\),
\[
DRMS(G) \ge \frac{m}{M_p(x)}.
\]
For \(p > -1\), equality holds if and only if \(x_e\) is constant over
\(E(G)\); for \(p = -1\), equality always holds.
\end{theorem}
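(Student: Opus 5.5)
The plan is to reduce the claim to the power-mean inequality applied to the positive numbers \(x_e\), \(e\in E(G)\). By Lemma~\ref{lem:delta1} every \(x_e\ge 2>0\), so all power means \(M_p(x)\) are well defined and positive. The key observation is that \(DRMS(G)=\sum_e x_e^{-1}=m\,\bigl(M_{-1}(x)\bigr)^{-1}\). This holds because \(M_{-1}(x)=\bigl(\frac1m\sum_e x_e^{-1}\bigr)^{-1}\) is exactly the harmonic mean of the \(x_e\). Hence the case \(p=-1\) is an identity, \(DRMS(G)=m/M_{-1}(x)\), and equality there holds for every graph, as claimed.

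For \(p\in(-1,1]\), I will invoke the classical power mean inequality: for positive reals the map \(p\mapsto M_p(x)\) is nondecreasing on \(\mathbb R\), with \(M_0\) the geometric mean. It is strictly increasing unless all \(x_e\) coincide. This gives \(M_{-1}(x)\le M_p(x)\), so
\[
DRMS(G)=\frac{m}{M_{-1}(x)}\ \ge\ \frac{m}{M_p(x)}.
\]
If I want the argument self-contained, I will prove \(M_{-1}\le M_p\) in three cases. For \(p\in(0,1]\), I will apply Jensen to the convex function \(t\mapsto t^{-1/p}\) evaluated at \(y_e=x_e^{p}\). For \(p\in(-1,0)\), I will apply Jensen to the convex function \(t\mapsto t^{1/|p|}\), since \(1/|p|>1\), at \(y_e=x_e^{-|p|}\). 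For \(p=0\), I will use AM--GM on the numbers \(x_e^{-1}\). In each case the direction of the inequality has to be tracked carefully when taking the reciprocal power.

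For the equality characterization with \(p>-1\), I will use the fact that in each case the function fed to Jensen (or AM--GM) is strictly convex. Equality therefore forces all \(y_e\) to be equal, i.e.\ all \(x_e\) equal. Conversely, if \(x_e\equiv c\), then every \(M_p(x)=c\) and both sides equal \(m/c\). The step I expect to need the most care is the exponent bookkeeping for negative \(p\in(-1,0)\). There, raising to the negative power \(1/p\) reverses inequalities, so I will check that the final inequality indeed reads \(M_{-1}\le M_p\) rather than the reverse. I will also confirm the \(p=0\) case separately rather than via the limit.
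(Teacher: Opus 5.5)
Your proposal is correct and matches the paper's proof: both write $DRMS(G)=m/M_{-1}(x)$ and reduce the claim to the monotonicity $M_{-1}(x)\le M_p(x)$ of power means, which is strict unless all $x_e$ coincide. Your optional Jensen/AM--GM verification for $p\in(0,1]$, $p\in(-1,0)$ and $p=0$ is a correct self-contained proof of that monotonicity step, including the equality case; the paper simply quotes the power-mean inequality there.
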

\begin{proof}
Since
\[
DRMS(G) = \sum_{e \in E(G)} x_e^{-1}
= \left( \frac{1}{m} \sum_{e \in E(G)} x_e^{-1} \right)^{-1} \cdot \frac{1}{m} \cdot m
= \frac{m}{M_{-1}(x)},
\]
the claimed inequality \(DRMS(G) \ge m/M_p(x)\) is equivalent to
\[
M_{-1}(x) \le M_p(x).
\]
This is the monotonicity of the power mean in its order: for \(q \le p\),
\(M_q(x) \le M_p(x)\). Taking \(q = -1\) and \(p \ge -1\) gives the claim.\\
\emph{Equality.} The function \(p \mapsto M_p(x)\) is strictly increasing
unless all \(x_e\) coincide; hence for \(p > -1\), equality \(M_{-1}(x) = M_p(x)\)
holds if and only if \(x_e\) is constant over \(E(G)\). For \(p = -1\), both
sides equal \(DRMS(G)\) trivially, so equality always holds.
\end{proof}

\begin{corollary}\label{cor:pmean-special}
Under the notation of Theorem~\ref{thm:pmean}, the bound takes the following
forms for special values of \(p\):
\begin{align*}
p = -\tfrac{2}{3}:&\quad DRMS(G) \ge m^{-1/2} \Bigl(\sum_{e} x_e^{-2/3}\Bigr)^{3/2}, \\
p = -\tfrac{1}{2}:&\quad DRMS(G) \ge \frac{{}^{m}SO_d(G)^2}{m}, \\
p = 0:&\quad DRMS(G) \ge m \Bigl(\prod_{e} x_e\Bigr)^{-1/m}, \\
p = \tfrac{1}{2}:&\quad DRMS(G) \ge \frac{m^3}{SO_d(G)^2}, \\
p = 1:&\quad DRMS(G) \ge \frac{m^2}{DF^*(G)}.
\end{align*}
\end{corollary}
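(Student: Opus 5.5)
The plan is to specialise Theorem~\ref{thm:pmean} to each listed value of \(p\), which lies in \([-1,1]\) in every case, and then compute \(m/M_p(x)\) explicitly. For \(p\neq0\) we have
\[
\frac{m}{M_p(x)} = m\Bigl(\frac1m\sum_e x_e^{p}\Bigr)^{-1/p} = m^{1+1/p}\Bigl(\sum_e x_e^{p}\Bigr)^{-1/p}.
\]
Each case then reduces to tracking the exponents of \(m\) and of the sum, and to recognising the sum as one of the indices defined in the Preliminaries. The equality conditions are inherited from Theorem~\ref{thm:pmean}, since every listed \(p>-1\): equality holds if and only if \(x_e\) is constant on \(E(G)\).

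The five cases are as follows.
\begin{itemize}
\item For \(p=-\tfrac23\), we have \(-1/p=\tfrac32\) and \(1+1/p=-\tfrac12\), which gives \(m^{-1/2}\bigl(\sum_e x_e^{-2/3}\bigr)^{3/2}\).
\item For \(p=-\tfrac12\), we have \(-1/p=2\) and \(1+1/p=-1\). Since \(\sum_e x_e^{-1/2}={}^{m}SO_d(G)\), this gives \({}^{m}SO_d(G)^2/m\).
\item For \(p=\tfrac12\), we have \(-1/p=-2\) and \(1+1/p=3\). Since \(\sum_e x_e^{1/2}=SO_d(G)\), this gives \(m^3/SO_d(G)^2\).
\item For \(p=1\), we get \(m^2/\sum_e x_e=m^2/DF^*(G)\).
\item For \(p=0\), use the definition \(M_0(x)=(\prod_e x_e)^{1/m}\) directly, which gives \(m(\prod_e x_e)^{-1/m}\).
\end{itemize}

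There is no real obstacle here. The only points needing care are the sign and exponent bookkeeping for negative \(p\), and the \(p=0\) case. Theorem~\ref{thm:pmean}'s monotonicity of power means covers the geometric mean \(M_0\) as the stated limit, so AM–GM–HM style monotonicity applies there too.
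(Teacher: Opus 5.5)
Your proposal is correct and follows the paper's approach: the paper gives no separate proof and treats the corollary as the direct specialization of Theorem~\ref{thm:pmean}, obtained by substituting each listed $p\in[-1,1]$ into $m/M_p(x)=m^{1+1/p}\bigl(\sum_e x_e^{p}\bigr)^{-1/p}$ and using $M_0$ directly when $p=0$. Your exponent bookkeeping is correct in all five cases, and the paper also re-derives the $p=-\tfrac12$, $p=\tfrac12$ and $p=1$ cases independently, via Cauchy--Schwarz and H\"older, in Theorems~\ref{thm:mdso}, \ref{thm:so} and \ref{thm:cauchy}.
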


\section{Sharp Bounds via Domination Degree-Based Indices}
In this section, we derive a collection of sharp bounds for \(DRMS(G)\) in
terms of the domination-degree-based indices introduced in Section~2. Each
bound is obtained by applying a suitable classical inequality to the sequence
\((x_e)_{e \in E(G)}\), and we characterize the graphs for which equality is
attained.

In the following three theorems, we establish bounds for $DRMS(G)$ in terms
of the modified forgotten domination index $DF^*(G)$.
\begin{theorem}
\label{thm:cauchy}
For every connected graph $G$,
\[
DRMS(G) \ge \frac{m^2}{DF^*(G)}.
\]
Equality holds if and only if $x_e$ is constant over all edges $e \in E(G)$.
\end{theorem}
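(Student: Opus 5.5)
The plan is to apply the Cauchy--Schwarz inequality to the two sequences \(\bigl(\sqrt{x_e}\bigr)_{e\in E(G)}\) and \(\bigl(1/\sqrt{x_e}\bigr)_{e\in E(G)}\). By Lemma~\ref{lem:delta1} we have \(d_d\ge1\), so \(x_e\ge2>0\) for every edge and both sequences are well defined. Cauchy--Schwarz gives
\[
m^2=\Bigl(\sum_{e\in E(G)}\sqrt{x_e}\cdot\frac{1}{\sqrt{x_e}}\Bigr)^2\le\Bigl(\sum_{e\in E(G)}x_e\Bigr)\Bigl(\sum_{e\in E(G)}\frac{1}{x_e}\Bigr)=DF^*(G)\cdot DRMS(G).
\]
Dividing by \(DF^*(G)>0\) yields the bound. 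Alternatively, the bound is exactly the case \(p=1\) of Theorem~\ref{thm:pmean}, as already recorded in Corollary~\ref{cor:pmean-special}. I would mention this, but give the direct Cauchy--Schwarz argument so that the equality case is self-contained.

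For the equality case, recall that equality in Cauchy--Schwarz holds if and only if the two vectors are proportional. That is, there is a constant \(\lambda\) with \(\sqrt{x_e}=\lambda/\sqrt{x_e}\) for all \(e\), i.e.\ \(x_e=\lambda\) is constant over \(E(G)\). Conversely, if \(x_e\equiv c\), then \(DRMS(G)=m/c\) and \(DF^*(G)=mc\), so \(m^2/DF^*(G)=m/c\) and equality holds.

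There is no genuine obstacle here. The only point needing care is positivity of \(x_e\), so that the square roots and the division are legitimate, and this is supplied by Lemma~\ref{lem:delta1}. The equality condition is stated in terms of \(x_e\), not regularity, so no further structural characterization is needed. For example, an edge joining vertices with domination degrees \(a\ne b\) still has constant \(x_e\) if every edge carries the same pair \(\{a,b\}\).
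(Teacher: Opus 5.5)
Your proposal is correct and follows the paper's proof essentially verbatim: Cauchy--Schwarz applied to $\sqrt{x_e}$ and $1/\sqrt{x_e}$, positivity of $x_e$ from Lemma~\ref{lem:delta1}, and the equality case from proportionality forcing $x_e$ to be constant. Your explicit verification of the converse ($x_e\equiv c$ gives $DRMS(G)=m/c=m^2/DF^*(G)$) is a small addition that the paper leaves implicit in the ``if and only if'' of the Cauchy--Schwarz equality condition.
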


\begin{proof}
Apply Cauchy--Schwarz to the sequences
\[
p_e := \sqrt{x_e}, \qquad q_e := \frac{1}{\sqrt{x_e}} \qquad (e \in E(G)).
\]
Then $p_e q_e = 1$, $p_e^2 = x_e$, and $q_e^2 = 1/x_e$, so
\[
\Bigl(\sum_{e \in E} 1\Bigr)^2
\le \Bigl(\sum_{e \in E} x_e\Bigr)\Bigl(\sum_{e \in E} \frac{1}{x_e}\Bigr)
= DF^*(G) \cdot DRMS(G).
\]
Since $x_e \ge 2\delta_d^2 \ge 2 > 0$ for every edge (by Lemma \ref{lem:delta1}), we have
$DF^*(G) > 0$, so dividing by $DF^*(G)$ yields
\[
DRMS(G) \ge \frac{m^2}{DF^*(G)}.
\]
Equality in Cauchy--Schwarz holds if and only if the sequences $p_e$ and
$q_e$ are proportional, which means there exists a constant $\lambda$ such
that $p_e = \lambda\, q_e$ for every $e \in E(G)$. Substituting
$p_e = \sqrt{x_e}$ and $q_e = 1/\sqrt{x_e}$ gives
$
\sqrt{x_e} = \frac{\lambda}{\sqrt{x_e}},
$
and hence $x_e = \lambda$ for every edge. Therefore equality holds if and only
if $x_e$ is constant over all edges $e \in E(G)$.
\end{proof}

\begin{theorem}
\label{thm:interval}
For every connected graph $G$,
\[
DRMS(G) \le \frac{2m(\delta_d^2 + \Delta_d^2) - DF^*(G)}{4\delta_d^2 \Delta_d^2}.
\]
Equality holds if and only if $x_e \in \{2\delta_d^2, 2\Delta_d^2\}$ for every
edge $e \in E(G)$.
\end{theorem}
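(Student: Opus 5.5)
The argument is edge by edge. Put $A := 2\delta_d^2$ and $B := 2\Delta_d^2$. By \eqref{eq:basic-ineq} we have $A \le x_e \le B$ for every edge $e$, and $A \ge 2 > 0$ by Lemma~\ref{lem:delta1}. The plan is to compare $1/x_e$ with the linear function of $x_e$ that agrees with it at $x_e = A$ and $x_e = B$, namely $\frac{A+B-x_e}{AB}$. This secant lies above $1/x$ on $[A,B]$ because $1/x$ is convex there.

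For a direct algebraic proof I will not appeal to convexity. Instead I will use the identity
\[
(x_e - A)(x_e - B) = x_e^2 - (A+B)x_e + AB \le 0 .
\]
The product is nonpositive because $A \le x_e \le B$. Dividing by $AB\,x_e > 0$ and rearranging gives
\[
\frac{1}{x_e} \le \frac{A + B - x_e}{AB}.
\]
Summing over the $m$ edges yields $DRMS(G) \le \frac{m(A+B) - DF^*(G)}{AB}$. Substituting $A = 2\delta_d^2$ and $B = 2\Delta_d^2$ gives $m(A+B) = 2m(\delta_d^2 + \Delta_d^2)$ and $AB = 4\delta_d^2\Delta_d^2$, which is exactly the stated bound.

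For the equality case, each edge contributes a nonnegative slack equal to $\frac{(B - x_e)(x_e - A)}{AB\,x_e}$. The total slack is zero if and only if every one of these terms vanishes. That happens exactly when each $x_e$ equals $A$ or $B$, that is, $x_e \in \{2\delta_d^2, 2\Delta_d^2\}$ for every edge. This characterization covers the regular case $\delta_d = \Delta_d$, where $A = B$ and the condition holds automatically.

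There is no real obstacle here. The only care needed is to check the sign of $(x_e - A)(x_e - B)$ and to confirm that $AB\,x_e > 0$, so that dividing by it preserves the direction of the inequality.
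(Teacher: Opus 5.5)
Your proof is correct and follows essentially the same route as the paper. The paper also sets $x_-=2\delta_d^2$ and $x_+=2\Delta_d^2$, and derives the edgewise chord bound $1/x\le (x_-+x_+-x)/(x_-x_+)$ from $(x-x_-)(x_+-x)\ge 0$; it then sums over the edges and gets the equality case from the vanishing of every nonnegative edge term, exactly as you do.
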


\begin{proof}
Set $x_- = 2\delta_d^2$ and $x_+ = 2\Delta_d^2$. For every $x \in [x_-, x_+]$,
\begin{equation}\label{eq:interval-aux}
\frac{1}{x} \le \frac{x_- + x_+ - x}{x_-x_+}.
\end{equation}
Indeed, multiplying both sides by $x_-x_+x > 0$ gives
$x_-x_+ \le (x_- + x_+ - x)x$, which rearranges to $(x - x_-)(x_+ - x) \ge 0$;
the latter holds because $x \in [x_-, x_+]$.\\
By the basic inequality $2\delta_d^2 \le x_e \le 2\Delta_d^2$, every edge
satisfies $x_e \in [x_-, x_+]$. Applying \eqref{eq:interval-aux} to $x = x_e$
and summing over all $e \in E(G)$,
\[
DRMS(G) = \sum_{e \in E} \frac{1}{x_e}
\le \sum_{e \in E} \frac{x_- + x_+ - x_e}{x_-x_+}
= \frac{1}{x_-x_+}\Bigl[(x_- + x_+)m - \sum_{e \in E} x_e\Bigr].
\]
Substituting $x_- = 2\delta_d^2$, $x_+ = 2\Delta_d^2$, $x_-x_+ = 4\delta_d^2 \Delta_d^2$,
and $\sum_{e \in E} x_e = DF^*(G)$ yields the claimed inequality.\\
\emph{Equality condition.} Since the sum
\[
\frac{1}{x_-x_+}\Bigl[(x_- + x_+)m - DF^*(G)\Bigr] - DRMS(G)
= \sum_{e \in E}\Bigl[\frac{x_- + x_+ - x_e}{x_-x_+} - \frac{1}{x_e}\Bigr]
\]
is a sum of nonnegative terms by \eqref{eq:interval-aux}, it vanishes if and
only if every term vanishes, that is, if and only if
\[
\frac{x_- + x_+ - x_e}{x_-x_+} = \frac{1}{x_e} \quad \text{for every } e \in E(G).
\]
By the factorization $(x_e - x_-)(x_+ - x_e) = 0$ established above, this is
equivalent to $x_e = x_-$ or $x_e = x_+$ for every edge. In other words,
equality holds if and only if
\[
x_e \in \{2\delta_d^2,\, 2\Delta_d^2\} \quad \text{for every } e \in E(G).
\qedhere
\]
\end{proof}

\begin{theorem}
\label{thm:kantorovich}
For every connected graph $G$,
\[
DRMS(G) \le \frac{m^2}{DF^*(G)}
\left(\frac{\delta_d^2 + \Delta_d^2}{2\delta_d \Delta_d}\right)^2.
\]
Equality holds if and only if $x_e \in \{2\delta_d^2, 2\Delta_d^2\}$ for every
edge $e \in E(G)$.
\end{theorem}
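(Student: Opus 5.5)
The plan is to deduce this bound directly from Theorem~\ref{thm:interval} combined with an AM--GM step; this is the standard proof of the Kantorovich inequality. Write $x_-=2\delta_d^2$, $x_+=2\Delta_d^2$, $A=x_-+x_+$, $P=x_-x_+$, and $F=DF^*(G)>0$.

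\textbf{Step 1 (interval bound).} Theorem~\ref{thm:interval} gives
\[
DRMS(G)\le \frac{Am-F}{P}=\frac{(Am-F)F}{P\,F}.
\]

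\textbf{Step 2 (AM--GM).} Viewed as a function of $F$, the product $(Am-F)F$ is at most $A^2m^2/4$, with equality if and only if $F=Am/2$. Note that $Am-F\ge0$ holds automatically, since each $x_e\le x_+<A$. Hence
\[
DRMS(G)\le \frac{m^2}{F}\cdot\frac{A^2}{4P}.
\]

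\textbf{Step 3 (simplify the constant).} We have
\[
\frac{A^2}{4P}=\frac{4(\delta_d^2+\Delta_d^2)^2}{16\,\delta_d^2\Delta_d^2}=\left(\frac{\delta_d^2+\Delta_d^2}{2\delta_d\Delta_d}\right)^2,
\]
which is exactly the claimed bound.

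\textbf{Step 4 (equality case).} Equality requires equality in both Step~1 and Step~2.
\begin{itemize}
\item Step~1 is tight if and only if $x_e\in\{x_-,x_+\}$ for every edge, by Theorem~\ref{thm:interval}.
\item Step~2 is tight if and only if $\sum_e x_e=m(x_-+x_+)/2$.
\end{itemize}
Suppose $k$ edges take the value $x_-$ and $m-k$ take the value $x_+$. The second condition becomes $(2k-m)(x_--x_+)=0$. So equality holds if and only if either $\delta_d=\Delta_d$ (the $k$-domination-regular case), or exactly half of the edges have $x_e=2\delta_d^2$ and the other half have $x_e=2\Delta_d^2$.

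\textbf{Expected obstacle.} The main obstacle is this equality characterization. The condition stated in the theorem, $x_e\in\{2\delta_d^2,2\Delta_d^2\}$ alone, looks insufficient when $\delta_d<\Delta_d$. For example, if $k\neq m/2$ edges take the value $x_-$, then $\sum_e x_e \ne m(x_-+x_+)/2$, so the AM--GM step in Step~2 is strict.

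I would therefore present the equality condition with the additional balance requirement (an equal number of edges of each type). I would also check whether the endpoint condition combined with connectedness forces this balance in the intended setting. I do not expect that to be true in general, so the theorem's equality statement likely needs to be corrected to include the balance condition.
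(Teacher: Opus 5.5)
The inequality part of your proposal is correct, and your route differs from the paper's. The paper invokes the Kantorovich inequality for $(x_e)$ in $[x_-,x_+]$ as a black box. You derive it from Theorem~\ref{thm:interval} together with the elementary estimate $(Am-F)F\le A^2m^2/4$. This keeps the argument self-contained and makes the equality analysis transparent. Your intermediate characterization of equality is also correct: $x_e\in\{x_-,x_+\}$ for all $e$, together with $(2k-m)(x_--x_+)=0$.

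\textbf{The gap.} Your last step concludes that the stated equality condition is insufficient and must be amended. That conclusion is wrong, and the ``if'' direction is left unproved. You are missing the fact that in a connected graph the condition $x_e\in\{2\delta_d^2,2\Delta_d^2\}$ for all edges already forces domination-regularity.
\begin{enumerate}
\item Since $\delta_d\le a_e,b_e\le\Delta_d$, the value $x_e=2\delta_d^2$ forces $a_e=b_e=\delta_d$, and $x_e=2\Delta_d^2$ forces $a_e=b_e=\Delta_d$.
\item So both endpoints of every edge have the same domination degree. Connectivity then makes $d_d$ constant on $V(G)$.
\item Hence $\delta_d=\Delta_d$ and $x_-=x_+$, so your balance condition holds automatically.
\end{enumerate}
Equivalently, the ``half and half'' configuration with $\delta_d<\Delta_d$ cannot occur. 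Any path from a $\delta_d$-vertex to a $\Delta_d$-vertex contains an edge whose endpoints have different domination degrees, and such an edge has $2\delta_d^2<x_e<2\Delta_d^2$. The theorem's equality condition is therefore correct as stated. It is equivalent to $G$ being domination-regular, in which case both sides equal $m/(2k^2)$. Adding this connectivity step to your Step~4 completes the proof; it is the same argument the paper uses in Theorem~\ref{thm:mdso}.

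Your skepticism is not baseless. The paper's own proof asserts that equality in the Kantorovich inequality holds if and only if every $x_e\in\{x_-,x_+\}$. For arbitrary sequences that omits exactly your balance requirement, and it is only rescued by the connectivity argument above. The right fix is to supply that argument, not to change the statement.
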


\begin{proof}
Set $x_- = 2\delta_d^2$ and $x_+ = 2\Delta_d^2$. By the basic inequality,
$x_e \in [x_-, x_+]$ for every edge $e \in E(G)$. The Kantorovich inequality
for a sequence $(x_e)_{e \in E(G)}$ in $[x_-, x_+]$ states that
\begin{equation}\label{eq:kantorovich}
\Bigl(\sum_{e \in E} x_e\Bigr)\Bigl(\sum_{e \in E} \frac{1}{x_e}\Bigr)
\le m^2 \cdot \frac{(x_- + x_+)^2}{4x_-x_+}.
\end{equation}
Substituting $\sum_{e \in E} x_e = DF^*(G)$, $\sum_{e \in E} 1/x_e = DRMS(G)$,
$x_- + x_+ = 2(\delta_d^2 + \Delta_d^2)$, and $x_-x_+ = 4\delta_d^2 \Delta_d^2$ gives
\[
DF^*(G) \cdot DRMS(G) \le m^2 \cdot \frac{(\delta_d^2 + \Delta_d^2)^2}{4\delta_d^2 \Delta_d^2}.
\]
Dividing both sides by $DF^*(G) > 0$ and rewriting the constant factor yields
\[
DRMS(G) \le \frac{m^2}{DF^*(G)}
\left(\frac{\delta_d^2 + \Delta_d^2}{2\delta_d \Delta_d}\right)^2.
\]
\emph{Equality condition.} Equality in \eqref{eq:kantorovich} holds if and only
if each $x_e$ takes only the values $x_-$ and $x_+$, that is, if and only if
$x_e \in \{2\delta_d^2, 2\Delta_d^2\}$ for every edge $e \in E(G)$.
\qedhere
\end{proof}

\begin{theorem}
\label{thm:dm2}
For every connected graph $G$,
\[
DRMS(G)\ \ge\ \frac{\delta_d\Delta_d}{\delta_d^{2}+\Delta_d^{2}}\cdot\frac{m^{2}}{DM_2(G)}.
\]
Equality holds if and only if $\{d_d(u),d_d(v)\}=\{\delta_d,\Delta_d\}$
for every edge $uv\in E(G)$ when $\delta_d<\Delta_d$, and for all edges
(i.e. $G$ is domination-regular) when $\delta_d=\Delta_d$.
\end{theorem}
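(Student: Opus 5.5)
The plan is to combine the Cauchy--Schwarz bound of Theorem~\ref{thm:cauchy}, namely $DRMS(G)\ge m^2/DF^*(G)$, with an edgewise comparison between $DF^*(G)$ and $DM_2(G)$. Set
\[
c=\frac{\delta_d^2+\Delta_d^2}{\delta_d\Delta_d}.
\]
The key claim is that for every edge $e=uv$ with $a_e=d_d(u)$ and $b_e=d_d(v)$,
\[
x_e=a_e^2+b_e^2\le c\,a_eb_e .
\]
Summing this over $E(G)$ gives $DF^*(G)\le c\,DM_2(G)$. Then
\[
DRMS(G)\ge \frac{m^2}{DF^*(G)}\ge \frac{m^2}{c\,DM_2(G)},
\]
which is exactly the stated bound. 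Lemma~\ref{lem:delta1} gives $d_d\ge 1$, so $DM_2(G)>0$ and the division is legitimate.

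To prove the edgewise claim, divide by $a_eb_e>0$ and put $t=a_e/b_e$. The inequality becomes $t+1/t\le \delta_d/\Delta_d+\Delta_d/\delta_d$. Since $\delta_d\le a_e,b_e\le\Delta_d$, we have $t\in[\delta_d/\Delta_d,\Delta_d/\delta_d]$. On this interval, which is symmetric under $t\mapsto 1/t$, the function $t\mapsto t+1/t$ is convex and attains its maximum at the endpoints. This proves the claim. Alternatively, one can check that it is equivalent to $(t-\delta_d/\Delta_d)(t-\Delta_d/\delta_d)\le 0$.

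For the equality case, suppose first that $\delta_d<\Delta_d$. Equality in the edgewise inequality holds iff $t$ is an endpoint, i.e.\ $a_e/b_e=\Delta_d/\delta_d$ or its reciprocal. Given $a_e,b_e\in[\delta_d,\Delta_d]$, this forces $\{a_e,b_e\}=\{\delta_d,\Delta_d\}$. So equality in $DF^*\le c\,DM_2$ holds iff every edge joins a vertex of domination degree $\delta_d$ to one of domination degree $\Delta_d$. In that situation $x_e=\delta_d^2+\Delta_d^2$ for every edge, so $x_e$ is constant and the equality condition of Theorem~\ref{thm:cauchy} holds automatically. Hence the whole chain is tight exactly under the stated condition. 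Conversely, if the final bound is an equality, both intermediate inequalities must be equalities, which recovers the condition.

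Now suppose $\delta_d=\Delta_d$. Then $c=2$, and $G$ is domination-regular, so every $x_e$ and every $a_eb_e$ is constant. Both inequalities are then equalities, and the bound reads $DRMS(G)=m/(2k^2)$, in agreement with Theorem~\ref{thm:elementary}.

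The only delicate point is the equality analysis. One must check that equality in the edgewise bound already forces $x_e$ to be constant, so that the Cauchy--Schwarz equality condition adds no further restriction. The calculation above confirms this, since $x_e=\delta_d^2+\Delta_d^2$ on every edge.
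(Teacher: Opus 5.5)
Your proof is correct. It uses the same two ingredients as the paper, the edgewise bound $a_e^2+b_e^2\le c\,a_eb_e$ and Cauchy--Schwarz, but chains them in the opposite order.

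The paper sums the edgewise bound in reciprocal form, $1/x_e\ge 1/(c\,a_eb_e)$, to get $DRMS(G)\ge c^{-1}\sum_e 1/(a_eb_e)$. Only then does it apply Cauchy--Schwarz, to the products $a_eb_e$. You instead apply Cauchy--Schwarz to the $x_e$ (reusing Theorem~\ref{thm:cauchy}) and sum the edgewise bound directly. This gives you the index relation $DF^*(G)\le c\,DM_2(G)$ as a by-product, with equality exactly under the stated edge condition.

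The equality analyses mirror each other. In the paper's route, edgewise equality makes $a_eb_e=\delta_d\Delta_d$ constant. In yours, it makes $x_e=\delta_d^2+\Delta_d^2$ constant. Either way the Cauchy--Schwarz step is automatically tight.

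Your version is shorter given Theorem~\ref{thm:cauchy}, while the paper's is self-contained. The two intermediate bounds $m^2/DF^*(G)$ and $c^{-1}\sum_e 1/(a_eb_e)$ are incomparable: the butterfly graph $B_{1,1}$ gives $16/11$ versus $7/5$, favouring yours, and $B_{1,2}$ gives $25/16$ versus $8/5$, favouring the paper's. So neither route yields a uniformly sharper intermediate estimate.

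Your proof of the edgewise step, via convexity of $t+1/t$ on the interval $[\delta_d/\Delta_d,\Delta_d/\delta_d]$ (symmetric under $t\mapsto 1/t$) or via the quadratic factorization, is equivalent to the paper's monotonicity argument for $r+1/r$ with $r=\max\{a_e,b_e\}/\min\{a_e,b_e\}$.
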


\begin{proof}
Put $\gamma_d=\dfrac{\delta_d\Delta_d}{\delta_d^{2}+\Delta_d^{2}}$. For an edge $e=uv$
recall $a_e=d_d(u)$ and $b_e=d_d(v)$; by symmetry we may assume $a_e\ge b_e$.
By Lemma \ref{lem:delta1}, $\delta_d\ge 1$, so all quantities below are well defined.

\medskip
\noindent\textit{Step 1 (edgewise inequality).}
Let $r=a_e/b_e$. Since $\delta_d\le b_e\le a_e\le\Delta_d$, we have
$1\le r\le \Delta_d/\delta_d$, with
\begin{equation}\label{eq:dm2-t}
r=\frac{\Delta_d}{\delta_d}
\quad\text{if and only if}\quad
a_e=\Delta_d\ \text{and}\ b_e=\delta_d .
\end{equation}
(Indeed $a_e/b_e\le \Delta_d/\delta_d$ because $a_e\le\Delta_d$ and $b_e\ge\delta_d$,
and equality forces both of these to be equalities.)
The function $f(r)=r+1/r$ satisfies $f'(r)=1-1/r^{2}\ge 0$ on $[1,\infty)$ and is
strictly increasing there. Hence
\[
\frac{a_e^{2}+b_e^{2}}{a_e b_e}=\frac{a_e}{b_e}+\frac{b_e}{a_e}=f(r)
\le f\!\left(\frac{\Delta_d}{\delta_d}\right)
=\frac{\Delta_d}{\delta_d}+\frac{\delta_d}{\Delta_d}
=\frac{\Delta_d^{2}+\delta_d^{2}}{\delta_d\Delta_d},
\]
and, by strict monotonicity and \eqref{eq:dm2-t}, equality holds if and only if
$\{a_e,b_e\}=\{\delta_d,\Delta_d\}$ when $\delta_d<\Delta_d$ (when
$\delta_d=\Delta_d$ both sides equal $2$ for every edge). Taking reciprocals,
\begin{equation}\label{eq:dm2-edgewise}
\frac{1}{x_e}=\frac{1}{a_e^{2}+b_e^{2}}\ \ge\ \gamma_d\,\frac{1}{a_e b_e}\qquad(e\in E(G)),
\end{equation}
with equality if and only if $\{a_e,b_e\}=\{\delta_d,\Delta_d\}$ (or $\delta_d=\Delta_d$).

\medskip
\noindent\textit{Step 2 (summation).}
Summing \eqref{eq:dm2-edgewise} over all edges gives
\begin{equation}\label{eq:dm2-sum}
DRMS(G)\ \ge\ \gamma_d\sum_{e\in E(G)}\frac{1}{a_e b_e}.
\end{equation}

\medskip
\noindent\textit{Step 3 (Cauchy--Schwarz).}
Applying the Cauchy--Schwarz inequality to the sequences
$\sqrt{a_e b_e}$ and $1/\sqrt{a_e b_e}$ $(e\in E(G))$,
\begin{equation}\label{eq:dm2-cs}
m^{2}=\Bigl(\sum_{e\in E(G)}1\Bigr)^{2}
\le \Bigl(\sum_{e\in E(G)}a_e b_e\Bigr)\Bigl(\sum_{e\in E(G)}\frac{1}{a_e b_e}\Bigr)
= DM_2(G)\sum_{e\in E(G)}\frac{1}{a_e b_e}.
\end{equation}
Since $DM_2(G)>0$, this gives $\sum_{e}1/(a_e b_e)\ge m^{2}/DM_2(G)$.
Combining with \eqref{eq:dm2-sum},
\[
DRMS(G)\ \ge\ \gamma_d\,\frac{m^{2}}{DM_2(G)},
\]
which is the claimed inequality.

\medskip
\noindent\textit{Equality.}
Write
\[
DRMS(G)-\gamma_d\frac{m^{2}}{DM_2(G)}
=\sum_{e}\Bigl(\frac1{x_e}-\frac{\gamma_d}{a_e b_e}\Bigr)
+\;\gamma_d\Bigl(\sum_{e}\frac1{a_e b_e}-\frac{m^{2}}{DM_2(G)}\Bigr) .
\]
By \eqref{eq:dm2-edgewise} the first sum is nonnegative, and by
\eqref{eq:dm2-cs} the second sum is nonnegative. Hence the right-hand side is
nonnegative, with equality if and only if both sums vanish.\\
If equality holds, the first bracket is a sum of nonnegative
terms and vanishes, so equality holds in \eqref{eq:dm2-edgewise} for every edge; by Step~1 every
edge $e$ satisfies $\{a_e,b_e\}=\{\delta_d,\Delta_d\}$ (or $\delta_d=\Delta_d$).\\
Suppose every edge satisfies $\{a_e,b_e\}=\{\delta_d,\Delta_d\}$
(or $\delta_d=\Delta_d$). Then equality holds in \eqref{eq:dm2-edgewise} for each $e$, so the first
bracket is $0$. Moreover $a_e b_e=\delta_d\Delta_d$ is constant over $E(G)$, so
the sequences $\sqrt{a_e b_e}$ and $1/\sqrt{a_e b_e}$ are proportional and equality
holds in \eqref{eq:dm2-cs}; the second bracket is $0$ as well.
\end{proof}

\begin{remark}
If $\delta_d<\Delta_d$, a connected graph in which every edge joins a vertex of
degree $\delta_d$ to one of degree $\Delta_d$ is bipartite, with the two classes
given by the two values of $d_d$. For $K_{2,3}$ we have $d_d\in\{4,3\}$,
$m=6$, $DM_2(K_{2,3})=72$, and
$\frac{12}{25}\cdot\frac{36}{72}=0.24=DRMS(K_{2,3})$, so the bound is attained
although $K_{2,3}$ is not domination-regular.
\end{remark}

\begin{theorem}
\label{thm:db1}
For every connected graph $G$,
\[
DRMS(G)\ge
\frac{m^2}{2\,\Delta_d\,DB_1(G)}.
\]
Moreover, equality holds if and only if $G$ is domination-regular.
\end{theorem}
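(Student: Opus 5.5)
The inequality follows the same two-step pattern as Theorem~\ref{thm:dm2}: first an edgewise comparison of $x_e$ with $\sigma_e$, then Cauchy--Schwarz on the sequence $(\sigma_e)$.

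\textit{Edgewise step.} For an edge $e=uv$ we have $a_e,b_e\le\Delta_d$. Hence
\[
x_e=a_e^2+b_e^2\le \Delta_d a_e+\Delta_d b_e=\Delta_d\,\sigma_e .
\]
Equality holds if and only if $a_e=b_e=\Delta_d$, because $a_e\ge1>0$ by Lemma~\ref{lem:delta1}, and likewise $b_e\ge1$. Taking reciprocals gives
\[
\frac{1}{x_e}\ge\frac{1}{\Delta_d\sigma_e},
\]
and summing over the edges yields
\[
DRMS(G)\ge\frac1{\Delta_d}\sum_e\frac1{\sigma_e}.
\]

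\textit{Cauchy--Schwarz step.} Apply Cauchy--Schwarz to the sequences $\sqrt{\sigma_e}$ and $1/\sqrt{\sigma_e}$:
\[
m^2\le\Bigl(\sum_e\sigma_e\Bigr)\Bigl(\sum_e\frac1{\sigma_e}\Bigr)=2\,DB_1(G)\sum_e\frac1{\sigma_e},
\]
using Definition~\ref{2.4}. Combining this with the edgewise step gives
\[
DRMS(G)\ge\frac{m^2}{2\Delta_d DB_1(G)}.
\]

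\textit{Equality.} I will split the deficit $DRMS(G)-m^2/(2\Delta_d DB_1(G))$ into two nonnegative parts, exactly as in the proof of Theorem~\ref{thm:dm2}:
\[
\sum_e\Bigl(\frac1{x_e}-\frac1{\Delta_d\sigma_e}\Bigr)+\frac1{\Delta_d}\Bigl(\sum_e\frac1{\sigma_e}-\frac{m^2}{2DB_1(G)}\Bigr).
\]
The deficit vanishes only if the first part vanishes, which requires $a_e=b_e=\Delta_d$ on every edge. Since $G$ is connected and every vertex lies on an edge (for $n\ge2$), this gives $d_d\equiv\Delta_d$, so $G$ is domination-regular.

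Conversely, suppose $d_d\equiv k$. Then $\Delta_d=k$, $\sigma_e=2k$, $DB_1(G)=km$ and $DRMS(G)=m/(2k^2)$. The bound evaluates to $m^2/(2k\cdot km)=m/(2k^2)$, so equality holds.

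The main point requiring care is the equality analysis. The edgewise inequality is tight only at $\Delta_d$, not at $\delta_d$, so being domination-regular at some arbitrary level is not automatically covered by the edgewise step. The check above resolves this, because in a regular graph that common value is $\Delta_d$. The trivial one-vertex graph has $m=0$ and should be excluded or noted separately.
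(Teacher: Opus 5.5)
Your proposal is correct and follows the paper's proof step for step. Both use the edgewise bound $x_e\le\Delta_d\sigma_e$, tight iff $a_e=b_e=\Delta_d$, followed by Cauchy--Schwarz on $(\sigma_e)$ with $\sum_e\sigma_e=2\,DB_1(G)$; for equality, both split the deficit into the same two nonnegative parts and check the converse directly for $k$-domination-regular graphs.
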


\begin{proof}
For an edge $e=uv$ recall $a_e=d_d(u)$, $b_e=d_d(v)$, $x_e=a_e^{2}+b_e^{2}$ and
$\sigma_e=a_e+b_e$. By Lemma \ref{lem:delta1} we have $\delta_d\ge 1$, so $a_e,b_e,\sigma_e,x_e>0$
and all quotients below are well defined. By Definition \ref{2.4},
$\sum_{e\in E(G)}\sigma_e=2\,DB_1(G)$.\\
Since $a_e,b_e\le\Delta_d$ for every edge $e$,
\[
\Delta_d\,\sigma_e-x_e=a_e(\Delta_d-a_e)+b_e(\Delta_d-b_e)\ \ge\ 0,
\]
hence $x_e\le\Delta_d\,\sigma_e$, that is,
\begin{equation}\label{eq:db1-edge}
\frac{1}{x_e}\ \ge\ \frac{1}{\Delta_d\,\sigma_e}\qquad(e\in E(G)).
\end{equation}
Both summands above are nonnegative and $a_e,b_e>0$, so equality holds in
\eqref{eq:db1-edge} if and only if $a_e=b_e=\Delta_d$. Summing
\eqref{eq:db1-edge} over all edges gives
\begin{equation}\label{eq:db1-sum}
DRMS(G)=\sum_{e\in E(G)}\frac{1}{x_e}\ \ge\ \frac{1}{\Delta_d}\sum_{e\in E(G)}\frac{1}{\sigma_e}.
\end{equation}
Applying the Cauchy--Schwarz inequality to the sequences $\sqrt{\sigma_e}$ and
$1/\sqrt{\sigma_e}$ $(e\in E(G))$, we obtain
\begin{equation}\label{eq:db1-cs}
m^{2}=\Bigl(\sum_{e\in E(G)}1\Bigr)^{2}
\le\Bigl(\sum_{e\in E(G)}\sigma_e\Bigr)\Bigl(\sum_{e\in E(G)}\frac{1}{\sigma_e}\Bigr)
=2\,DB_1(G)\sum_{e\in E(G)}\frac{1}{\sigma_e}.
\end{equation}
Since $DB_1(G)>0$, this gives $\sum_{e}1/\sigma_e\ge m^{2}/\bigl(2\,DB_1(G)\bigr)$,
and combining with \eqref{eq:db1-sum} yields
\[
DRMS(G)\ \ge\ \frac{m^{2}}{2\,\Delta_d\,DB_1(G)} .
\]
For the equality case, write
\[
DRMS(G)-\frac{m^{2}}{2\Delta_d DB_1(G)}
=\sum_{e\in E(G)}\Bigl(\frac1{x_e}-\frac1{\Delta_d\sigma_e}\Bigr)
+\frac1{\Delta_d}\Bigl(\sum_{e\in E(G)}\frac1{\sigma_e}-\frac{m^{2}}{2DB_1(G)}\Bigr).
\]
The first sum is nonnegative by \eqref{eq:db1-edge} and the bracket is nonnegative
by \eqref{eq:db1-cs}; hence equality holds in the theorem if and only if both
vanish. If equality holds, then every term of the first sum vanishes, so equality
holds in \eqref{eq:db1-edge} for every edge, i.e.\ $a_e=b_e=\Delta_d$ for every
$e=uv\in E(G)$. As $G$ is connected, every vertex lies on an edge, so
$d_d\equiv\Delta_d$ and $G$ is domination-regular. Conversely, if $d_d\equiv k$
for some $k\ge1$, then $\Delta_d=k$, $\sigma_e=2k$ and $x_e=2k^{2}$ for every
edge, so $DB_1(G)=mk$ and
\[
\frac{m^{2}}{2\Delta_d DB_1(G)}=\frac{m^{2}}{2k\cdot mk}=\frac{m}{2k^{2}}=DRMS(G)
\]
\end{proof}

\begin{corollary}
\label{cor:db1-sharp}
If $G$ is $k$-domination-regular, then $DB_1(G)=mk$ and
\[
DRMS(G)=\frac{m}{2k^2}
=
\frac{m^2}{2\,\Delta_d\,DB_1(G)}.
\]
Hence the bound of Theorem~\ref{thm:db1} is attained.
\end{corollary}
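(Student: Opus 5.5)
The corollary follows directly from the domination-regularity hypothesis together with the converse direction already established in Theorem~\ref{thm:db1}. Assume $d_d\equiv k$ on $V(G)$; by Lemma~\ref{lem:delta1}, $k\ge1$. Then $\delta_d=\Delta_d=k$, and for every edge $e=uv$ we have $a_e=b_e=k$, so $\sigma_e=2k$ and $x_e=2k^2$.

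\emph{Computing $DB_1(G)$.} By Definition~\ref{2.4},
\[
DB_1(G)=\sum_{e\in E(G)}\frac{\sigma_e}{2}=\sum_{e\in E(G)}k=mk .
\]

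\emph{Computing $DRMS(G)$.} Either invoke the corollary to Theorem~\ref{thm:elementary}, or sum directly:
\[
DRMS(G)=\sum_{e\in E(G)}\frac{1}{2k^2}=\frac{m}{2k^2}.
\]

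\emph{The bound in Theorem~\ref{thm:db1}.} Substituting $\Delta_d=k$ and $DB_1(G)=mk$ gives
\[
\frac{m^2}{2\,\Delta_d\,DB_1(G)}=\frac{m^2}{2k\cdot mk}=\frac{m}{2k^2},
\]
which coincides with $DRMS(G)$. Hence the lower bound of Theorem~\ref{thm:db1} is attained.

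There is no genuine obstacle. The only point requiring care is that $k\ge1$ (from Lemma~\ref{lem:delta1}) and $m\ge1$, so that all the quotients are defined. This holds for any connected graph of order at least $2$, which is the setting of Theorem~\ref{thm:db1}.
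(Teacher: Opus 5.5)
Your proof is correct and matches the paper's argument. The corollary is exactly the converse step at the end of the proof of Theorem~\ref{thm:db1}, where $d_d\equiv k$ gives $\sigma_e=2k$, $x_e=2k^2$, $DB_1(G)=mk$, and $m^2/(2k\cdot mk)=m/(2k^2)=DRMS(G)$; your note that $m\ge1$ is needed for the quotient to be defined is a small point the paper leaves implicit.
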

Now, we define the constant $\kappa_d=\frac{\delta_d+\Delta_d}{\sqrt{\delta_d^2+\Delta_d^2}}\in(1,\sqrt2\,]$ which will be used subsequently.

\begin{lemma}\label{lem:kappa}
For every edge $e$,
\[
\kappa_d\ \le\ \frac{\sigma_e}{\sqrt{x_e}}\ \le\ \sqrt2 .
\]
The right equality holds if and only if $a_e=b_e$; the left one holds if and only if
$\{a_e,b_e\}=\{\delta_d,\Delta_d\}$ or $\delta_d=\Delta_d$.
\end{lemma}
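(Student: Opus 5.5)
Both inequalities will come from the same reduction as Step~1 of Theorem~\ref{thm:dm2}: write everything in terms of the ratio $r=a_e/b_e$, where we assume without loss of generality that $a_e\ge b_e$. Then
\[
\frac{\sigma_e^2}{x_e}=\frac{(a_e+b_e)^2}{a_e^2+b_e^2}=1+\frac{2a_eb_e}{a_e^2+b_e^2}=1+\frac{2}{f(r)},\qquad f(r)=r+\frac1r .
\]
Since $\sigma_e/\sqrt{x_e}>0$, it suffices to bound $\sigma_e^2/x_e$ and take square roots. Both directions then reduce to controlling $f(r)$ on the range of admissible ratios.

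\emph{Upper bound.} By AM--GM, $f(r)\ge 2$, with equality iff $r=1$. Hence $\sigma_e^2/x_e\le 2$, and equality holds iff $a_e=b_e$. Equivalently, $(a_e+b_e)^2\le 2(a_e^2+b_e^2)$ because the difference is $(a_e-b_e)^2\ge0$, which gives the right inequality and its equality case directly.

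\emph{Lower bound.} As in \eqref{eq:dm2-t}, we have $1\le r\le \Delta_d/\delta_d$, and Lemma~\ref{lem:delta1} guarantees $\delta_d\ge1$, so the ratio is well defined. The function $f$ is strictly increasing on $[1,\infty)$, so
\[
f(r)\le f(\Delta_d/\delta_d)=\frac{\delta_d^2+\Delta_d^2}{\delta_d\Delta_d}.
\]
Therefore
\[
\frac{\sigma_e^2}{x_e}\ge 1+\frac{2\delta_d\Delta_d}{\delta_d^2+\Delta_d^2}=\frac{(\delta_d+\Delta_d)^2}{\delta_d^2+\Delta_d^2}=\kappa_d^2 ,
\]
and taking square roots gives the left inequality. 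For the equality case, if $\delta_d<\Delta_d$, strict monotonicity of $f$ forces $r=\Delta_d/\delta_d$. By \eqref{eq:dm2-t} this means $a_e=\Delta_d$ and $b_e=\delta_d$, that is, $\{a_e,b_e\}=\{\delta_d,\Delta_d\}$. If $\delta_d=\Delta_d$, then $a_e=b_e$ for every edge, so $\sigma_e/\sqrt{x_e}=\sqrt2=\kappa_d$ holds automatically.

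The argument is mostly routine; the one step requiring care is the lower-bound equality case. There we must invoke the characterization in \eqref{eq:dm2-t}, and we must separate out the degenerate case $\delta_d=\Delta_d$, where the bounds $\kappa_d$ and $\sqrt2$ coincide.
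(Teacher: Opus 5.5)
Your proposal is correct and is essentially the paper's proof: both rewrite $\sigma_e^2/x_e = 1 + 2/f(r)$ with $f(r)=r+1/r$ and $1\le r\le \Delta_d/\delta_d$, use strict monotonicity of $f$ on $[1,\infty)$ (your AM--GM step is just $f(r)\ge f(1)$), and read off the equality cases via \eqref{eq:dm2-t}, treating $\delta_d=\Delta_d$ separately. The only step you leave implicit is the converse for the left equality when $\delta_d<\Delta_d$: substituting $\{a_e,b_e\}=\{\delta_d,\Delta_d\}$ gives $r=\Delta_d/\delta_d$ and hence equality. This is immediate, and the paper likewise treats it as such.
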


\begin{proof}
Since $\sigma_e^{2}=x_e+2a_eb_e$ and $a_eb_e/x_e=\bigl(a_e/b_e+b_e/a_e\bigr)^{-1}$,
\[
\frac{\sigma_e^{2}}{x_e}=1+\frac{2}{a_e/b_e+b_e/a_e}.
\]
Put $r=\max\{a_e,b_e\}/\min\{a_e,b_e\}$, so that $a_e/b_e+b_e/a_e=f(r)$ with
$f(r)=r+1/r$ and $1\le r\le\Delta_d/\delta_d$. The function $f$ is strictly increasing on $[1,\infty)$, hence
\[
2=f(1)\ \le\ a_e/b_e+b_e/a_e\ \le\ f\!\left(\tfrac{\Delta_d}{\delta_d}\right)
=\frac{\Delta_d^{2}+\delta_d^{2}}{\delta_d\Delta_d}.
\]
Consequently
\[
\kappa_d^{2}=1+\frac{2\delta_d\Delta_d}{\delta_d^{2}+\Delta_d^{2}}\ \le\
\frac{\sigma_e^{2}}{x_e}\ \le\ 2 ,
\]
and taking square roots gives the claim. By strict monotonicity of $f$, the
upper bound is attained if and only if $r=1$, i.e.\ $a_e=b_e$, and the lower bound is
attained if and only if $r=\Delta_d/\delta_d$, i.e.\ $\{a_e,b_e\}=\{\delta_d,\Delta_d\}$
when $\delta_d<\Delta_d$; if $\delta_d=\Delta_d$ then $a_e=b_e$, both
bounds equal $\sqrt2$ and the lower equality holds trivially.
\end{proof}

\begin{theorem}
\label{thm:so}
For every connected graph $G$,
\[
DRMS(G)\ \ge\ \frac{m^{3}}{SO_d(G)^{2}}.
\]
Equality holds if and only if $x_e$ is constant over $E(G)$.
\end{theorem}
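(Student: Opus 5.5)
The inequality is the case $p=\tfrac12$ of Theorem~\ref{thm:pmean}, already listed in Corollary~\ref{cor:pmean-special}. Since $M_{1/2}(x)=\bigl(\tfrac1m\,SO_d(G)\bigr)^2$, we get $m/M_{1/2}(x)=m^3/SO_d(G)^2$, and Theorem~\ref{thm:pmean} gives equality exactly when $x_e$ is constant. To keep the argument self-contained and parallel to Theorem~\ref{thm:cauchy}, I would also give a direct proof using two applications of Cauchy--Schwarz (equivalently, Hölder).

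\emph{Step 1.} Apply Cauchy--Schwarz to the sequences $x_e^{1/4}$ and $x_e^{-1/4}$. This gives
\[
m^2\le\Bigl(\sum_e \sqrt{x_e}\Bigr)\Bigl(\sum_e x_e^{-1/2}\Bigr)=SO_d(G)\cdot{}^{m}SO_d(G).
\]

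\emph{Step 2.} Apply Cauchy--Schwarz to the sequences $1$ and $x_e^{-1/2}$. This gives
\[
{}^{m}SO_d(G)^2\le m\sum_e x_e^{-1}=m\,DRMS(G).
\]

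\emph{Step 3.} Square Step~1 and combine it with Step~2:
\[
m^4\le SO_d(G)^2\,{}^{m}SO_d(G)^2\le SO_d(G)^2\, m\, DRMS(G).
\]
Since $x_e\ge2$ by Lemma~\ref{lem:delta1}, we have $SO_d(G)>0$, so we may divide by $m\,SO_d(G)^2$ to obtain the bound.

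For the equality case, suppose $x_e\equiv c$. Then $DRMS(G)=m/c$ and $SO_d(G)=m\sqrt c$, so $m^3/SO_d(G)^2=m/c$ and equality holds. Conversely, suppose equality holds in the theorem. Every step of the chain is then an equality. In particular, equality in Step~2 forces $x_e^{-1/2}$ to be proportional to the constant sequence $1$, so $x_e$ is constant.

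The only point needing care is this bookkeeping for the equality characterization: we must check that equality in the final bound really forces equality in each intermediate inequality. It does, because all quantities are positive and every step is multiplicative.
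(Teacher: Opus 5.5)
Your argument is correct, but it is organized differently from the paper's proof. The paper obtains the bound in a single step. It applies H\"older's inequality with exponents $3$ and $3/2$ to $x_e^{-1/3}$ and $x_e^{1/3}$, which gives $m\le DRMS(G)^{1/3}SO_d(G)^{2/3}$ directly, and it reads the equality case off the H\"older equality condition ($x_e^{-1}$ proportional to $x_e^{1/2}$). You instead pass through the intermediate quantity ${}^{m}SO_d(G)$. In power-mean language, your Step~2 is $M_{-1}(x)\le M_{-1/2}(x)$ and your Step~1 is $M_{-1/2}(x)\le M_{1/2}(x)$, whereas H\"older yields $M_{-1}(x)\le M_{1/2}(x)$ with no stop in between. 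Your route needs only Cauchy--Schwarz and gives more information. It shows $DRMS(G)\ge {}^{m}SO_d(G)^{2}/m\ge m^{3}/SO_d(G)^{2}$, so the bound of this theorem is implied by, and never sharper than, the left inequality of Theorem~\ref{thm:mdso}. The cost is extra bookkeeping in the equality case, which you handle correctly. Equality at the two ends of $m^{4}\le SO_d(G)^{2}\,{}^{m}SO_d(G)^{2}\le m\,SO_d(G)^{2}\,DRMS(G)$ forces equality in the middle. Dividing by $SO_d(G)^{2}>0$ then gives equality in Step~2, which alone makes $x_e$ constant. Your opening remark is also right: the statement is exactly the $p=\tfrac12$ line of Corollary~\ref{cor:pmean-special}, so given Theorem~\ref{thm:pmean} no separate proof is needed. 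The paper's own H\"older argument is likewise just a direct proof of that special case.
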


\begin{proof}
By Lemma \ref{lem:delta1} we have $\delta_d\ge1$, so $x_e>0$ for every edge $e$. Apply
H\"older's inequality with the conjugate exponents $3$ and $3/2$ to the
positive sequences $x_e^{-1/3}$ and $x_e^{1/3}$ $(e\in E(G))$, we obtain
\[
m=\sum_{e\in E(G)}x_e^{-1/3}x_e^{1/3}
\le\Bigl(\sum_{e\in E(G)}x_e^{-1}\Bigr)^{1/3}\Bigl(\sum_{e\in E(G)}x_e^{1/2}\Bigr)^{2/3}
=DRMS(G)^{1/3}\,SO_d(G)^{2/3}.
\]
Cubing gives $m^{3}\le DRMS(G)\,SO_d(G)^{2}$, and since $SO_d(G)>0$ the
claimed inequality follows. Equality in H\"older's inequality holds if and only if the
sequences $\bigl(x_e^{-1/3}\bigr)^{3}=x_e^{-1}$ and
$\bigl(x_e^{1/3}\bigr)^{3/2}=x_e^{1/2}$ are proportional, that is,
$x_e^{-1}=\lambda\,x_e^{1/2}$ for some constant $\lambda>0$ and all $e$. This
means $x_e^{3/2}=1/\lambda$, so $x_e$ is constant over $E(G)$. Conversely, if
$x_e\equiv x$, then $DRMS(G)=m/x$ and $SO_d(G)=m\sqrt{x}$, hence
$m^{3}/SO_d(G)^{2}=m/x=DRMS(G)$.
\end{proof}

For every edge $e$ put $y_e:=x_e^{-1/2}$, so that
$DRMS(G)=\sum_{e}y_e^{2}$ and ${}^{m}SO_d(G)=\sum_{e}y_e$. Since
$2\delta_d^{2}\le x_e\le 2\Delta_d^{2}$, we have
\[
y_-\ :=\ \frac{1}{\sqrt2\,\Delta_d}\ \le\ y_e\ \le\ \frac{1}{\sqrt2\,\delta_d}\ =:\ y_+ .
\]

\begin{theorem}\label{thm:mdso}
For every connected graph $G$,
\[
\frac{{}^{m}SO_d(G)^{2}}{m}\ \le\ DRMS(G)\ \le\
\frac{\delta_d+\Delta_d}{\sqrt2\,\delta_d\Delta_d}\,{}^{m}SO_d(G)-\frac{m}{2\delta_d\Delta_d}.
\]
Equality holds on the left if and only if $x_e$ is constant over $E(G)$, and
on the right if and only if $G$ is domination-regular.
\end{theorem}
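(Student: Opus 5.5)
The left inequality is a direct specialization of what we already have. Since $DRMS(G)=\sum_e y_e^2$ and ${}^{m}SO_d(G)=\sum_e y_e$, Cauchy--Schwarz applied to the sequences $(1)_{e}$ and $(y_e)_{e}$ gives $\bigl(\sum_e y_e\bigr)^2\le m\sum_e y_e^2$. This is exactly the case $p=-\tfrac12$ of Theorem~\ref{thm:pmean} and Corollary~\ref{cor:pmean-special}. Equality in Cauchy--Schwarz holds if and only if $y_e$ is constant, which is equivalent to $x_e$ being constant over $E(G)$. So the left-hand claim and its equality case come for free.

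For the right inequality I will copy the interval trick of Theorem~\ref{thm:interval}, now applied to $y_e\in[y_-,y_+]$ instead of to $x_e$. For every edge,
\[
(y_e-y_-)(y_+-y_e)\ge 0,
\qquad\text{equivalently}\qquad
y_e^2\le (y_-+y_+)\,y_e-y_-y_+ .
\]
Summing over $E(G)$ gives
\[
DRMS(G)\le (y_-+y_+)\,{}^{m}SO_d(G)-m\,y_-y_+ .
\]
It remains to evaluate the constants. We have
\[
y_-+y_+=\frac{1}{\sqrt2}\Bigl(\frac1{\Delta_d}+\frac1{\delta_d}\Bigr)=\frac{\delta_d+\Delta_d}{\sqrt2\,\delta_d\Delta_d},
\qquad
y_-y_+=\frac{1}{2\delta_d\Delta_d},
\]
which is precisely the stated upper bound.

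The equality analysis for the right inequality is the only step needing care. The difference between the right-hand side and $DRMS(G)$ is a sum of the nonnegative terms $(y_e-y_-)(y_+-y_e)$. Hence equality holds if and only if every $y_e\in\{y_-,y_+\}$, that is, $x_e\in\{2\delta_d^2,2\Delta_d^2\}$ for every edge. Because $\delta_d\le a_e,b_e\le\Delta_d$, the condition $x_e=2\delta_d^2$ forces $a_e=b_e=\delta_d$, and $x_e=2\Delta_d^2$ forces $a_e=b_e=\Delta_d$. So every edge joins two vertices of equal domination degree, each equal to $\delta_d$ or to $\Delta_d$. By connectedness, following paths shows that $d_d$ is constant on $V(G)$, so $G$ is domination-regular. (Here I use that $n\ge2$, so every vertex lies on an edge; the trivial graph is excluded or handled vacuously.) Conversely, if $d_d\equiv k$, then $y_-=y_+=1/(\sqrt2 k)$ and every term vanishes, so equality holds. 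I expect no real obstacle beyond correctly passing from ``each edge is monochromatic'' to global regularity via connectedness.
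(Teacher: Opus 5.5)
Your proposal is correct and follows the paper's proof essentially step for step. The left bound comes from Cauchy--Schwarz on $(y_e)$, and the right bound from summing the chord inequality $(y_e-y_-)(y_+-y_e)\ge 0$ over the edges. The equality case is handled the same way too: $x_e\in\{2\delta_d^2,2\Delta_d^2\}$ forces every edge to have endpoints of equal domination degree, and connectedness then gives domination-regularity.
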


\begin{proof}
By Lemma \ref{lem:delta1}, $x_e>0$ for every edge $e$, so $y_e:=x_e^{-1/2}$ is well defined,
and $DRMS(G)=\sum_{e}y_e^{2}$, ${}^{m}SO_d(G)=\sum_{e}y_e$. Put
\[
y_-=\frac{1}{\sqrt2\,\Delta_d},\qquad y_+=\frac{1}{\sqrt2\,\delta_d}.
\]
By \eqref{eq:basic-ineq}, $2\delta_d^{2}\le x_e\le2\Delta_d^{2}$, hence $y_-\le y_e\le y_+$ for
every edge $e$.\\
By the Cauchy--Schwarz inequality,
$\bigl(\sum_e y_e\bigr)^{2}\le m\sum_e y_e^{2}$, which is the left inequality;
equality holds if and only if $y_e$ is constant, that is, if and only if $x_e$ is constant. For the
right inequality, $(y_e-y_-)(y_+-y_e)\ge0$ gives
$y_e^{2}\le(y_-+y_+)\,y_e-y_-y_+$ for every edge. Summing over $E(G)$,
\[
DRMS(G)\le(y_-+y_+)\,{}^{m}SO_d(G)-m\,y_-y_+,
\]
and inserting $y_-+y_+=\dfrac{\delta_d+\Delta_d}{\sqrt2\,\delta_d\Delta_d}$ and
$y_-y_+=\dfrac{1}{2\delta_d\Delta_d}$ gives the claim. The difference between
the two sides equals $\sum_{e}(y_e-y_-)(y_+-y_e)$, a sum of nonnegative terms,
so equality holds if and only if $y_e\in\{y_-,y_+\}$ for every edge, that is,
$x_e\in\{2\Delta_d^{2},2\delta_d^{2}\}$. Since $\delta_d\le a_e,b_e\le\Delta_d$,
$x_e=2\delta_d^{2}$ forces $a_e=b_e=\delta_d$ and $x_e=2\Delta_d^{2}$ forces
$a_e=b_e=\Delta_d$. Thus equality implies that both endpoints of every edge
have the same domination degree, so $d_d$ is constant along every path; as $G$
is connected, $d_d$ is constant on $V(G)$ and $G$ is domination-regular.
Conversely, if $d_d\equiv k$, then $\delta_d=\Delta_d=k$ and
$y_e=y_-=y_+=\dfrac1{\sqrt2\,k}$ for every edge, so each term
$(y_e-y_-)(y_+-y_e)$ vanishes and equality holds; both sides equal
$m/(2k^{2})$.
\end{proof}

\begin{theorem}\label{thm:deso}
Let \(G\) be a connected graph. Then
\[
\frac{\kappa_d \, m^2}{DESO(G)} \le DRMS(G) \le \frac{\sqrt{2} \, c_d \, m^2}{DESO(G)},
\]
where, $
\kappa_d = \frac{\delta_d + \Delta_d}{\sqrt{\delta_d^2 + \Delta_d^2}}, c_d = \left(\frac{\delta_d^2 + \Delta_d^2}{2\delta_d \Delta_d}\right)^2.
$ \\
Equality on the left holds if and only if every edge joins a vertex of
domination degree \(\delta_d\) to one of domination degree \(\Delta_d\)
(or \(\delta_d = \Delta_d\)); equality on the right holds if and only if \(G\)
is domination-regular.
\end{theorem}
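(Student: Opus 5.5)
The plan is to reduce both inequalities to results already proved, using Lemma~\ref{lem:kappa} to compare the summand $\sigma_e\sqrt{x_e}$ of $DESO(G)$ with $x_e$ edge by edge. By Lemma~\ref{lem:delta1} all quantities $a_e,b_e,\sigma_e,x_e$ are positive, so every quotient below is well defined.

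\emph{Lower bound.} Apply Cauchy--Schwarz to the sequences $\sqrt{\sigma_e\sqrt{x_e}}$ and $1/\sqrt{\sigma_e\sqrt{x_e}}$. This gives
\[
m^2\le DESO(G)\sum_e\frac{1}{\sigma_e\sqrt{x_e}}.
\]
By the left inequality of Lemma~\ref{lem:kappa}, $\sigma_e\ge\kappa_d\sqrt{x_e}$, hence
\[
\frac{1}{\sigma_e\sqrt{x_e}}\le\frac{1}{\kappa_d\,x_e}.
\]
Summing, $\sum_e 1/(\sigma_e\sqrt{x_e})\le DRMS(G)/\kappa_d$. Combining the two displays yields $m^2\le DESO(G)\,DRMS(G)/\kappa_d$, which is the left inequality.

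For the equality case, I will write the gap as a sum of two nonnegative parts, exactly as in the proof of Theorem~\ref{thm:dm2}. Equality then forces two things:
\begin{itemize}
\item[(i)] equality in Lemma~\ref{lem:kappa} on every edge, i.e.\ $\{a_e,b_e\}=\{\delta_d,\Delta_d\}$, or else $\delta_d=\Delta_d$;
\item[(ii)] $\sigma_e\sqrt{x_e}$ is constant over $E(G)$.
\end{itemize}
Condition (i) already implies (ii), since then $\sigma_e$ and $x_e$ are the same on every edge. Hence equality holds if and only if (i) holds.

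\emph{Upper bound.} The right inequality of Lemma~\ref{lem:kappa} gives $\sigma_e\sqrt{x_e}\le\sqrt2\,x_e$, so $DESO(G)\le\sqrt2\,DF^*(G)$, i.e.
\[
\frac{1}{DF^*(G)}\le\frac{\sqrt2}{DESO(G)}.
\]
Theorem~\ref{thm:kantorovich} gives $DRMS(G)\le c_d\,m^2/DF^*(G)$. Chaining the two yields $DRMS(G)\le \sqrt2\,c_d\,m^2/DESO(G)$.

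\emph{Upper equality case.} This is the step needing care. Equality forces equality in both links of the chain. Equality in $DESO(G)\le\sqrt2\,DF^*(G)$ forces $a_e=b_e$ on every edge, again by Lemma~\ref{lem:kappa}. Connectivity then propagates this along paths, so $d_d$ is constant and $G$ is domination-regular. We do not even need the Kantorovich equality condition for this direction.

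Conversely, suppose $d_d\equiv k$. Then $c_d=1$, $x_e=2k^2$ and $\sigma_e=2k$ on every edge, so $DESO(G)=2\sqrt2\,k^2m$. The right-hand side becomes
\[
\frac{\sqrt2\,m^2}{2\sqrt2\,k^2m}=\frac{m}{2k^2}=DRMS(G),
\]
by Theorem~\ref{thm:elementary}. I expect no real obstacle beyond bookkeeping the two-part decomposition of the gap for the lower-bound equality case.
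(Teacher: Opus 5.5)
Your proof is correct. The upper bound and its equality analysis are exactly the paper's: $DESO(G)\le\sqrt2\,DF^*(G)$ from Lemma~\ref{lem:kappa}, chained with Theorem~\ref{thm:kantorovich}, with equality forcing $a_e=b_e$ on every edge.

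For the lower bound you use the same two ingredients as the paper, but in the opposite order.
\begin{itemize}
\item The paper sums the left half of Lemma~\ref{lem:kappa} into the global comparison $\kappa_d\,DF^*(G)\le DESO(G)$. It then multiplies this with Theorem~\ref{thm:cauchy}, $m^2\le DF^*(G)\,DRMS(G)$. So its intermediate quantity is $DF^*(G)$, and the Cauchy--Schwarz equality condition is that $x_e$ is constant.
\item You apply Cauchy--Schwarz directly to the summands of $DESO(G)$, and then use Lemma~\ref{lem:kappa} edge by edge on $\sum_e 1/(\sigma_e\sqrt{x_e})$. So your intermediate quantity is this reciprocal sum, and your Cauchy--Schwarz equality condition is that $\sigma_e\sqrt{x_e}$ is constant.
\end{itemize}
In both versions the Cauchy--Schwarz condition follows from the edgewise condition $\{a_e,b_e\}=\{\delta_d,\Delta_d\}$ (or $\delta_d=\Delta_d$). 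The equality characterization therefore comes out the same.

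The two routes buy slightly different things.
\begin{itemize}
\item The paper's route is more modular. One sandwich, $\kappa_d DF^*\le DESO\le\sqrt2\,DF^*$, combined with the already proved sandwich $m^2\le DF^*\cdot DRMS\le c_d m^2$, gives both bounds symmetrically in one stroke.
\item Your route is self-contained on the lower side and does not need Theorem~\ref{thm:cauchy}. It follows the edgewise-then-Cauchy--Schwarz pattern of Theorems~\ref{thm:dm2} and~\ref{thm:db1}. Your explicit two-term gap decomposition, and your direct check $DESO(G)=2\sqrt2\,k^2m$ in the regular case, spell out what the paper compresses into ``equality requires equality in both''.
\end{itemize}
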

\begin{proof}
By Lemma~\ref{lem:kappa}, \(\kappa_d \sqrt{x_e} \le \sigma_e \le \sqrt{2} \sqrt{x_e}\)
for every edge \(e \in E(G)\). Multiplying by \(\sqrt{x_e}\) and summing over
all edges gives
\begin{equation}\label{eq:deso-i}
\kappa_d \, DF^*(G) \le DESO(G) \le \sqrt{2} \, DF^*(G).
\end{equation}
By Theorems~\ref{thm:cauchy} and~\ref{thm:kantorovich},
\begin{equation}\label{eq:deso-ii}
m^2 \le DF^*(G) \cdot DRMS(G) \le c_d \, m^2.
\end{equation}\\
\emph{Lower bound.} Multiplying the left inequalities of
\eqref{eq:deso-i} and \eqref{eq:deso-ii},
\[
DESO(G) \cdot DRMS(G) \ge \kappa_d \, DF^*(G) \cdot DRMS(G) \ge \kappa_d \, m^2.
\]
Dividing by \(DESO(G) > 0\) gives \(DRMS(G) \ge \kappa_d m^2 / DESO(G)\).\\
\emph{Upper bound.} Multiplying the right inequalities,
\[
DESO(G) \cdot DRMS(G) \le \sqrt{2} \, DF^*(G) \cdot DRMS(G) \le \sqrt{2} \, c_d \, m^2.
\]
Dividing by \(DESO(G) > 0\) gives
\(DRMS(G) \le \sqrt{2} \, c_d m^2 / DESO(G)\).\\
\emph{Equality, left.} It requires equality in both \eqref{eq:deso-i} (left)
and \eqref{eq:deso-ii} (left). By the equality condition of
Lemma~\ref{lem:kappa}, the former forces \(\sigma_e/\sqrt{x_e} = \kappa_d\) on
every edge, that is, \(\{a_e, b_e\} = \{\delta_d, \Delta_d\}\) or
\(\delta_d = \Delta_d\). Conversely, if every edge satisfies
\(\{a_e, b_e\} = \{\delta_d, \Delta_d\}\), then
\[
x_e = a_e^2 + b_e^2 = \delta_d^2 + \Delta_d^2
\]
is constant over \(E(G)\), so equality holds in the Cauchy--Schwarz bound
\eqref{eq:deso-ii} (left). Hence equality on the left holds under the stated
condition.\\
\emph{Equality, right.} It requires equality in both \eqref{eq:deso-i} (right)
and \eqref{eq:deso-ii} (right). By the equality condition of
Lemma~\ref{lem:kappa}, the former forces \(\sigma_e = \sqrt{2}\sqrt{x_e}\), that
is, \(a_e = b_e\) on every edge. Since \(G\) is connected, this implies
\(d_d \equiv k\) on \(V(G)\), that is, \(G\) is domination-regular.
Conversely, if \(d_d \equiv k\), then \(\delta_d = \Delta_d = k\), so
\[
c_d = \left(\frac{k^2 + k^2}{2k^2}\right)^2 = 1,
\]
and \(a_e = b_e\) for every edge, so all inequalities become equalities.
\end{proof}

\section{Closed Forms for Standard Graph Families}
In this section, we compute the exact value of \(DRMS(G)\) for several
standard graph families. For each family, we first determine the minimal
dominating sets of the graph, then compute the domination degrees of its
vertices, and finally evaluate \(DRMS\) by summing over the edge set.
\begin{proposition}\label{prop:kn}
For every \(n \ge 2\), 
\[
DRMS(K_n) = \dfrac{n(n-1)}{4}.
\]
where \(K_n\) denotes the complete graph on \(n\) vertices.
\end{proposition}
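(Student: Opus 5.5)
The plan is to determine the minimal dominating sets of \(K_n\) explicitly, read off the domination degrees, and then sum over the \(\binom n2\) edges.

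\emph{Step 1: the minimal dominating sets are exactly the singletons.} Any vertex \(v\) of \(K_n\) is adjacent to all others, so \(N[v]=V(K_n)\) and \(\{v\}\) is dominating. It is trivially minimal, since its only proper subset is empty and \(n\ge2\) means the empty set dominates nothing.

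Conversely, let \(S\) be a minimal dominating set with \(|S|\ge2\) and pick \(u\in S\). Then \(S\setminus\{u\}\) is nonempty, so it contains some \(v\) with \(N[v]=V(K_n)\). Hence \(S\setminus\{u\}\) is already dominating, contradicting minimality. One can also see this through Lemma~\ref{lem:private}: every \(w\in V(K_n)\) satisfies \(N[w]\cap S=S\), which has at least two elements, so no vertex of \(S\) has a private neighbour.

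\emph{Step 2: domination degrees.} By Step 1, each vertex \(u\) lies in exactly one minimal dominating set, namely \(\{u\}\). So \(d_d\equiv1\) and \(K_n\) is \(1\)-domination-regular.

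\emph{Step 3: the sum.} Every edge now contributes \(1/(1^2+1^2)=1/2\). By the corollary to Theorem~\ref{thm:elementary} with \(k=1\) and \(m=n(n-1)/2\), we get \(DRMS(K_n)=m/2=n(n-1)/4\). This value agrees with the one already used in the proof of Theorem~\ref{thm:max}.

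The only step needing any care is the exclusion of minimal dominating sets with two or more vertices in Step 1, and that is immediate.
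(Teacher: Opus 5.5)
Your proof is correct and follows the same route as the paper: show that the minimal dominating sets of \(K_n\) are exactly the singletons, since any dominating \(S\) with \(|S|\ge 2\) remains dominating after deleting a vertex, conclude \(d_d\equiv 1\), and sum \(1/2\) over the \(n(n-1)/2\) edges. Your private-neighbour remark via Lemma~\ref{lem:private} and the appeal to the domination-regular corollary are only minor variations on the paper's argument.
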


\begin{proof}
Every singleton \(\{v\}\) is a minimal dominating set of \(K_n\): it is
dominating because \(v\) is adjacent to all other vertices, and it is minimal
because it contains no proper subset. Conversely, any dominating set \(S\) with
\(|S| \ge 2\) is not minimal, since \(S \setminus \{u\}\) still dominates
\(K_n\) for any \(u \in S\). Hence the minimal dominating sets of \(K_n\) are
exactly the singletons, so \(d_d(v) = 1\) for every \(v \in V(K_n)\).\\
Since \(d_d^2(u) + d_d^2(v) = 2\) for every edge \(uv\), and
\(m = \binom{n}{2} = n(n-1)/2\), we obtain
\[
DRMS(K_n) = \sum_{uv \in E(K_n)} \frac{1}{2} = \frac{m}{2} = \frac{n(n-1)}{4}. \qedhere
\]
\end{proof}

\begin{proposition}\label{prop:star}
For every \(n \ge 2\), \[DRMS(S_n) = \dfrac{n-1}{2}.\]
where \(S_n = K_{1, n-1}\) denotes the star on \(n\) vertices.
\end{proposition}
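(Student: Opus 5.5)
To prove the formula we determine the minimal dominating sets of \(S_n=K_{1,n-1}\) explicitly, read off the domination degrees, and then sum over the \(n-1\) edges. Let \(c\) denote the centre and \(\ell_1,\dots,\ell_{n-1}\) the leaves. For \(n=2\) the graph is \(K_2\), and the claim follows from Proposition~\ref{prop:kn}: \(DRMS(K_2)=\tfrac12=\tfrac{n-1}{2}\). So we may assume \(n\ge3\).

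The key step is to show that \(S_n\) has exactly two minimal dominating sets, namely \(\{c\}\) and \(L=\{\ell_1,\dots,\ell_{n-1}\}\).

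\(\{c\}\) is dominating and has no proper nonempty subset, so it is minimal. \(L\) is dominating, since every leaf lies in \(L\) and \(c\) is adjacent to \(\ell_1\). It is minimal by Lemma~\ref{lem:private}: each \(\ell_i\) is its own private neighbour, because \(N[\ell_i]\cap L=\{\ell_i\}\).

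Conversely, let \(S\) be any minimal dominating set. If \(c\in S\), then \(\{c\}\subseteq S\) is already dominating, so minimality forces \(S=\{c\}\). If \(c\notin S\), then each leaf \(\ell_i\) must be dominated by a vertex of \(N[\ell_i]\setminus\{c\}=\{\ell_i\}\), so every leaf lies in \(S\), and hence \(S=L\).

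This case analysis is the only step needing care. The subtle point is that for \(n\ge3\) the leaf set \(L\) is genuinely distinct from \(\{c\}\); the case \(n=2\) was treated separately above because there \(L\) is a singleton.

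It follows that \(d_d(v)=1\) for every vertex \(v\): the centre lies only in \(\{c\}\), and each leaf lies only in \(L\). Thus \(S_n\) is \(1\)-domination-regular. Applying the Corollary to Theorem~\ref{thm:elementary} with \(k=1\) and \(m=n-1\) gives
\[
DRMS(S_n)=\frac{n-1}{2\cdot 1^2}=\frac{n-1}{2}.
\]
Equivalently, summing directly, each of the \(n-1\) edges contributes \(1/(1+1)=\tfrac12\). This value also meets the tree bound of Theorem~\ref{thm:max} with equality.
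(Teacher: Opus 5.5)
Your proof is correct and follows the paper's argument essentially verbatim: you identify $\{c\}$ and the leaf set as the only minimal dominating sets, conclude $d_d\equiv 1$, and sum $1/2$ over the $n-1$ edges; invoking Lemma~\ref{lem:private} for the minimality of $L$ and the corollary to Theorem~\ref{thm:elementary} are only cosmetic differences. The separate treatment of $n=2$ is unnecessary, since $L=\{\ell_1\}\neq\{c\}$ there as well (so your "subtle point" is not actually an issue), and the general argument applies unchanged.
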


\begin{proof}
Let \(c\) be the center and \(l_1, \ldots, l_{n-1}\) the leaves of \(S_n\).
We claim that the minimal dominating sets of \(S_n\) are exactly
\[
S_c := \{c\}, \qquad S_L := \{l_1, \ldots, l_{n-1}\}.
\]
\emph{Both are minimal dominating sets.} The singleton \(\{c\}\) dominates
\(S_n\) (every leaf is adjacent to \(c\)) and is minimal (it has no proper
subset). The set \(S_L\) dominates \(S_n\) (each leaf dominates itself, and
\(c\) is adjacent to every leaf in \(S_L\)); it is minimal because removing
any leaf \(l_i\) leaves \(l_i\) undominated (the leaves are pairwise
non-adjacent and \(c \notin S_L\)).\\
\emph{No other minimal dominating set exists.} Let \(S\) be a minimal
dominating set of \(S_n\). If \(c \in S\), then \(S = \{c\}\) by minimality
(since \(\{c\}\) already dominates \(S_n\)). If \(c \notin S\), then every
leaf \(l_i\) must belong to \(S\) (a leaf is adjacent only to \(c\), which is
not in \(S\)); hence \(S \supseteq S_L\), and minimality forces \(S = S_L\).\\
Consequently \(d_d(c) = 1\) (only \(S_c\) contains \(c\)) and
\(d_d(l_i) = 1\) (only \(S_L\) contains \(l_i\)), so \(d_d(v) = 1\) for every
\(v \in V(S_n)\). Since \(S_n\) has \(m = n-1\) edges and every edge
contributes \(1/(1^2 + 1^2) = 1/2\),
\[
DRMS(S_n) = \frac{m}{2} = \frac{n-1}{2}. \qedhere
\]
\end{proof}

\begin{proposition}\label{prop:double-star}
For every \(p, q \ge 1\), 
\[
DRMS(S_{p,q}) = \frac{p+q+1}{8}.
\]
where \(S_{p,q}\) denotes the double-star graph formed by joining the centres
of two stars \(K_{1,p}\) and \(K_{1,q}\).
\end{proposition}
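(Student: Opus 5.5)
The plan is to show that $S_{p,q}$ is $2$-domination-regular. The Corollary to Theorem~\ref{thm:elementary} then gives $DRMS(S_{p,q})=m/(2\cdot 2^2)=m/8$. Since $S_{p,q}$ has $m=p+q+1$ edges ($p$ pendant edges at one centre, $q$ at the other, and the central edge), this yields the claimed value. As in Proposition~\ref{prop:star}, the work is to list all minimal dominating sets.

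\textbf{Setup.} Let $a$ and $b$ be the centres, with $L_a=\{a_1,\dots,a_p\}$ the leaves at $a$ and $L_b=\{b_1,\dots,b_q\}$ the leaves at $b$. I will show that the minimal dominating sets are exactly
\[
\{a,b\},\qquad \{a\}\cup L_b,\qquad \{b\}\cup L_a,\qquad L_a\cup L_b .
\]

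\textbf{Each is a minimal dominating set.} Domination is immediate in all four cases. For minimality I use Lemma~\ref{lem:private} and exhibit a private neighbour for every member.
\begin{itemize}
\item In $\{a,b\}$, the vertex $a$ has private neighbour $a_1$ and $b$ has private neighbour $b_1$; this uses $p,q\ge1$.
\item In $\{a\}\cup L_b$, the vertex $a$ has private neighbour $a_1$, and each $b_j$ is its own private neighbour, since $b\notin S$.
\item The set $\{b\}\cup L_a$ is handled symmetrically.
\item In $L_a\cup L_b$, each leaf is its own private neighbour, because neither centre lies in the set.
\end{itemize}

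\textbf{No others.} Let $S$ be a minimal dominating set and split into cases on which centres lie in $S$. A leaf $b_j$ has $N[b_j]=\{b_j,b\}$, so if $b\notin S$ then $L_b\subseteq S$; likewise, if $a\notin S$ then $L_a\subseteq S$.
\begin{itemize}
\item If $a,b\in S$, then $\{a,b\}\subseteq S$ already dominates, so minimality gives $S=\{a,b\}$.
\item If $a\in S$ and $b\notin S$, then $S\supseteq\{a\}\cup L_b$. This set dominates, so $S=\{a\}\cup L_b$.
\item The case $b\in S$, $a\notin S$ is symmetric and gives $S=\{b\}\cup L_a$.
\item If neither centre is in $S$, then $S\supseteq L_a\cup L_b$. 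This set dominates, so equality holds.
\end{itemize}

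\textbf{Counting.} From the list, $a$ lies in exactly the two sets $\{a,b\}$ and $\{a\}\cup L_b$, and $b$ lies in exactly two sets by symmetry. Each $a_i$ lies in exactly $\{b\}\cup L_a$ and $L_a\cup L_b$, and each $b_j$ lies in exactly two sets by symmetry. Hence $d_d\equiv 2$, every edge has $x_e=8$, and $DRMS(S_{p,q})=(p+q+1)/8$.

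The main obstacle is the completeness of the enumeration, which the case split on the centres settles. The only subtle point is that $p,q\ge1$ is needed for the private neighbours of the centres. Otherwise, for instance, $\{a,b\}$ would fail to be minimal when $p=0$.
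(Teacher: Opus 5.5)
Your proposal is correct and follows essentially the same route as the paper: the same four minimal dominating sets, the same case split on which centres lie in $S$, and the same incidence count giving $d_d\equiv 2$ and hence $DRMS(S_{p,q})=(p+q+1)/8$. Your use of Lemma~\ref{lem:private} to certify minimality is more explicit than the paper's one-line justification, but the argument is the same.
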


\begin{proof}
Let \(c_1, c_2\) be the two centers, \(l_1, \ldots, l_p\) the leaves adjacent
to \(c_1\), and \(l'_1, \ldots, l'_q\) the leaves adjacent to \(c_2\). We claim
that the minimal dominating sets of \(S_{p,q}\) are exactly
\[
S_1 = \{c_1, c_2\}, \quad
S_2 = \{c_1\} \cup \{l'_1, \ldots, l'_q\}, \quad
S_3 = \{c_2\} \cup \{l_1, \ldots, l_p\}, \quad
S_4 = \{l_1, \ldots, l_p, l'_1, \ldots, l'_q\}.
\]
Each of these dominates \(S_{p,q}\) and is minimal (removing any vertex
leaves either a leaf undominated or a center that is the only dominator of
its leaves).\\
Conversely, let \(S\) be a minimal dominating set. If \(c_1, c_2 \in S\),
then \(S = S_1\) by minimality. If \(c_1 \in S\) but \(c_2 \notin S\), every
leaf \(l'_j\) must belong to \(S\) (it is adjacent only to \(c_2\), which is
not in \(S\)), so \(S \supseteq \{c_1\} \cup \{l'_1, \ldots, l'_q\} = S_2\);
minimality forces \(S = S_2\). The case \(c_2 \in S\), \(c_1 \notin S\) is
symmetric and gives \(S = S_3\). Finally, if \(c_1, c_2 \notin S\), then every
leaf must belong to \(S\), so \(S = S_4\). Hence there are exactly four
minimal dominating sets.\\
Counting incidences, each vertex lies in exactly two of them:
\(c_1 \in S_1, S_2\); \(c_2 \in S_1, S_3\); each \(l_i \in S_3, S_4\); each
\(l'_j \in S_2, S_4\). Therefore \(d_d(v) = 2\) for every \(v \in V(S_{p,q})\).
Since \(S_{p,q}\) has \(m = p + q + 1\) edges and every edge contributes
\(1/(2^2 + 2^2) = 1/8\),
\[
DRMS(S_{p,q}) = \frac{m}{8} = \frac{p+q+1}{8}. \qedhere
\]
\end{proof}

\begin{proposition}\label{prop:kmn}
For every \(m, n \ge 2\),
\[
DRMS(K_{m,n}) = \frac{mn}{(m+1)^2 + (n+1)^2}.
\]
where \(K_{m,n}\) denotes the complete bipartite graph with parts of sizes
\(m\) and \(n\).
\end{proposition}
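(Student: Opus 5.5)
Let the two parts be \(A=\{a_1,\dots,a_m\}\) and \(B=\{b_1,\dots,b_n\}\), with \(m,n\ge2\). The plan follows the pattern of Propositions~\ref{prop:star} and~\ref{prop:double-star}. First classify all minimal dominating sets of \(K_{m,n}\), then read off the domination degrees, and finally sum over the \(mn\) edges.

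I claim the minimal dominating sets are exactly the following.
\begin{itemize}
\item The whole part \(A\), and the whole part \(B\).
\item The \(mn\) two-element sets \(\{a_i,b_j\}\).
\end{itemize}

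\emph{These are minimal dominating sets.} The set \(A\) dominates, since every vertex of \(B\) is adjacent to every vertex of \(A\). It is minimal because \(A\) is independent, so removing \(a_i\) leaves \(a_i\) undominated. Here \(n\ge1\) is irrelevant; what matters is that no vertex of \(B\) is in the set. By symmetry the same holds for \(B\). A pair \(\{a_i,b_j\}\) dominates, because \(a_i\) covers \(B\) and \(b_j\) covers \(A\). It is minimal because \(\{a_i\}\) alone fails to dominate the other vertices of \(A\), which exist since \(m\ge2\); symmetrically \(\{b_j\}\) fails because \(n\ge2\). This is exactly where the hypothesis \(m,n\ge2\) enters.

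\emph{There are no others.} Let \(S\) be a minimal dominating set. If \(S\) meets both parts, say \(a_i,b_j\in S\), then \(\{a_i,b_j\}\subseteq S\) already dominates, so minimality gives \(S=\{a_i,b_j\}\). If \(S\subseteq A\), then each \(a_k\) can be dominated only by itself, since no vertex of \(B\) lies in \(S\); hence \(S=A\). Likewise \(S\subseteq B\) forces \(S=B\).

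The main care point is this classification, in particular checking the minimality of the pairs using \(m,n\ge2\). The rest is counting. A vertex \(a_i\) lies in \(A\) and in the \(n\) pairs \(\{a_i,b_j\}\), so \(d_d(a_i)=n+1\). Symmetrically \(d_d(b_j)=m+1\). Every edge joins some \(a_i\) to some \(b_j\), so each of the \(mn\) edges contributes \(1/\bigl((n+1)^2+(m+1)^2\bigr)\). Summing gives
\[
DRMS(K_{m,n})=\frac{mn}{(m+1)^2+(n+1)^2},
\]
as claimed.

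As a consistency check, for \(K_{2,3}\) this gives domination degrees \(4\) and \(3\) and the value \(6/25=0.24\), matching the Remark after Theorem~\ref{thm:dm2}. Note that the symbol \(m\) here denotes a part size, not the graph size.
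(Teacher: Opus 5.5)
Your proof is correct and follows the paper's argument essentially step for step. You use the same classification of the minimal dominating sets of \(K_{m,n}\) into the two parts and the \(mn\) cross pairs, invoking \(m,n\ge2\) exactly where the paper does (minimality of the pairs), and then the same domination degrees \(n+1\) and \(m+1\) and the same summation over the \(mn\) edges.
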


\begin{proof}
Let \(V_1, V_2\) be the two parts of \(K_{m,n}\), with \(|V_1| = m\) and
\(|V_2| = n\). We first determine the minimal dominating sets of \(K_{m,n}\).
We claim they are exactly:
\[
V_1, \qquad V_2, \qquad \{v, w\} \text{ with } v \in V_1,\ w \in V_2.
\]
\emph{Each is a minimal dominating set.} The set \(V_1\) dominates \(K_{m,n}\)
(every vertex of \(V_2\) is adjacent to all of \(V_1\)) and is minimal
(removing any \(v \in V_1\) leaves \(v\) undominated, since \(v\) is adjacent
only to \(V_2\), which is disjoint from \(V_1\)). The same argument applies
to \(V_2\). For \(\{v, w\}\), domination holds because every vertex of \(V_1\)
is adjacent to \(w\) and every vertex of \(V_2\) is adjacent to \(v\).
Minimality holds because neither \(\{v\}\) nor \(\{w\}\) dominates: \(V_1
\setminus \{v\}\) is undominated by \(\{w\}\) (its vertices are adjacent only
to \(V_2\), and \(w \in V_2\) alone does not cover them), and symmetrically
for \(\{v\}\).\\
\emph{No other minimal dominating set exists.} Let \(S\) be a minimal
dominating set. If \(S \subseteq V_1\), then for every \(v \in V_1 \setminus S\)
to be dominated, some vertex of \(S\) must be adjacent to it; but no vertex
of \(V_1\) is adjacent to \(v\). Hence \(S = V_1\). Symmetrically for
\(S \subseteq V_2\). If \(S\) meets both \(V_1\) and \(V_2\), say \(v \in S
\cap V_1\) and \(w \in S \cap V_2\), then \(\{v, w\} \subseteq S\) already
dominates \(K_{m,n}\), so minimality forces \(S = \{v, w\}\).\\
\emph{Domination degrees.} A vertex \(v \in V_1\) lies in \(V_1\) and in the
\(n\) pairs \(\{v, w\}\) with \(w \in V_2\), hence \(d_d(v) = n + 1\).
Symmetrically, \(d_d(w) = m + 1\) for \(w \in V_2\).\\
\emph{DRMS.} Every edge \(vw\) with \(v \in V_1\), \(w \in V_2\) contributes
\[
\frac{1}{d_d^2(v) + d_d^2(w)} = \frac{1}{(n+1)^2 + (m+1)^2},
\]
and there are \(mn\) such edges. Hence
\[
DRMS(K_{m,n}) = \frac{mn}{(m+1)^2 + (n+1)^2}. \qedhere
\]
\end{proof}

\begin{definition}\label{def:kpartite}
For \(k \ge 2\) and \(r_1, \ldots, r_k \ge 2\), the \emph{complete
\(k\)-partite graph} \(K_{r_1, \dots, r_k}\) has vertex set partitioned into
\(k\) parts \(V_1, \ldots, V_k\) of sizes \(r_1, \ldots, r_k\), with two
vertices adjacent if and only if they lie in different parts. We write
\(N = \sum_{i=1}^k r_i\).
\end{definition}

\begin{lemma}\label{lem:kpartite-mds}
Let \(G = K_{r_1, \dots, r_k}\) with \(k \ge 2\) and all \(r_i \ge 2\). The
minimal dominating sets of \(G\) are exactly the parts \(V_l\) and the pairs
\(\{u, w\}\) with \(u, w\) in different parts.
\end{lemma}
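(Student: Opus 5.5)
The argument mirrors the proof of Proposition~\ref{prop:kmn}: we show that each listed set is a minimal dominating set, and then that every minimal dominating set has one of these forms. Throughout we use Lemma~\ref{lem:private}, phrased through private neighbours, together with the basic adjacency rule of \(G\): \(N[u]=\{u\}\cup (V(G)\setminus V_l)\) for \(u\in V_l\).

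\emph{Each listed set is minimal dominating.} Consider first a part \(V_l\). Every vertex outside \(V_l\) is adjacent to all of \(V_l\), and every vertex of \(V_l\) lies in \(V_l\), so \(V_l\) dominates. For \(u\in V_l\) we have \(N[u]\cap V_l=\{u\}\), because parts are independent. Hence \(u\) is its own private neighbour, and Lemma~\ref{lem:private} gives minimality. Now consider a pair \(\{u,w\}\) with \(u\in V_i\), \(w\in V_j\), \(i\ne j\). The vertex \(u\) dominates everything outside \(V_i\), and \(w\) dominates everything outside \(V_j\). Since \(V_i\cap V_j=\varnothing\), every vertex is dominated. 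For minimality, note that \(r_j\ge2\), so some \(w'\in V_j\setminus\{w\}\) exists. This \(w'\) is adjacent to \(u\) but is neither equal nor adjacent to \(w\), so \(w'\) is a private neighbour of \(u\). By symmetry, using \(r_i\ge2\), \(w\) also has a private neighbour. This is exactly where the hypothesis \(r_i\ge2\) enters: if some \(r_i=1\), the singleton would already be dominating.

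\emph{No other minimal dominating set exists.} Let \(S\) be a minimal dominating set. We split into two cases.
\begin{itemize}
\item If \(S\) meets two different parts, pick \(u,w\in S\) in different parts. Then \(\{u,w\}\subseteq S\) already dominates by the previous paragraph, so minimality forces \(S=\{u,w\}\).
\item Otherwise \(S\subseteq V_l\) for a single \(l\). Any \(v\in V_l\setminus S\) has no neighbour in \(V_l\) and is not in \(S\), so it is undominated. Therefore \(S=V_l\).
\end{itemize}

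The only step needing care is the minimality of the pairs, which relies on \(r_i\ge 2\). One should also remark that the two families are distinct: a pair is never a part, since parts have size at least \(2\) and are independent while pairs are edges. This disjointness matters for the later degree count \(d_d(v)=1+(N-r_l)\) for \(v\in V_l\).
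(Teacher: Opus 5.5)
Your proof is correct and follows essentially the same route as the paper: check that parts and cross pairs dominate, and obtain minimality of a pair $\{u,w\}$ from $r_i, r_j \ge 2$ (your private neighbour $w' \in V_j\setminus\{w\}$ is exactly the paper's vertex of $V_j\setminus\{w\}$ left undominated by $\{w\}$). Completeness is then proved by splitting on whether $S$ meets two parts or lies inside one, and phrasing minimality through Lemma~\ref{lem:private} instead of direct vertex removal is only a cosmetic difference.
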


\begin{proof}
Let \(V_1, \ldots, V_k\) be the parts.\\
\emph{Domination.} If \(u \in V_i\), \(w \in V_j\), \(i \neq j\), then
\(\{u, w\}\) dominates \(G\): vertices of \(V_i\) are adjacent to \(w\),
vertices of \(V_j\) to \(u\), and all other vertices to both. Each \(V_l\)
dominates \(G\), since every vertex outside \(V_l\) is adjacent to all of
\(V_l\).\\
\emph{Minimality.} Removing \(u \in V_l\) from \(V_l\) leaves \(u\)
undominated. For \(\{u, w\}\) as above, \(\{w\}\) does not dominate the
vertices of \(V_j \setminus \{w\}\) (nonempty because \(r_j \geq 2\)), and
\(\{u\}\) does not dominate \(V_i \setminus \{u\}\) (because \(r_i \geq 2\)).\\
\emph{Completeness.} Let \(S\) be a minimal dominating set. If \(S\) meets two
different parts, it contains a dominating pair, so \(S\) equals that pair by
minimality. Otherwise \(S \subseteq V_l\) for some \(l\); a vertex of \(V_l\)
is dominated by \(S\) only if it lies in \(S\) (it is not adjacent to anything
in \(V_l\)), hence \(S \supseteq V_l\) and \(S = V_l\).
\end{proof}

\begin{lemma}\label{lem:kpartite-dd}
Under the notation of Lemma~\ref{lem:kpartite-mds}, for every vertex \(v\) in
the \(l\)-th part,
\[
d_d(v) = N - r_l + 1,
\]
where \(N = \sum_i r_i\).
\end{lemma}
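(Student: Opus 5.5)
The count of minimal dominating sets through \(v\) follows directly from the complete classification in Lemma~\ref{lem:kpartite-mds}. Fix a vertex \(v \in V_l\). By that lemma, every minimal dominating set of \(G\) is either a part \(V_j\) or a pair \(\{u,w\}\) with \(u\) and \(w\) in different parts. So \(d_d(v)\) is the number of these sets that contain \(v\), and I count the two kinds separately.

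\emph{Parts.} The parts \(V_1,\dots,V_k\) are pairwise disjoint, so \(v\) lies in exactly one of them, namely \(V_l\). This contributes \(1\).

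\emph{Pairs.} A pair containing \(v\) has the form \(\{v,w\}\) with \(w\) in a part different from \(V_l\), so \(w \in V(G)\setminus V_l\). Distinct choices of \(w\) give distinct pairs, and every such pair is a minimal dominating set by Lemma~\ref{lem:kpartite-mds}. This gives exactly \(|V(G)\setminus V_l| = N - r_l\) pairs.

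The two kinds are distinct sets, because each part has size \(r_j \ge 2\) and so differs from any pair; they can only coincide if a part has exactly two vertices. Even then, a part never equals a pair of vertices from different parts, so there is no double counting. Adding the contributions gives
\[
d_d(v) = 1 + (N - r_l) = N - r_l + 1 .
\]

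There is no real obstacle, since the classification lemma does all the work. The only point that needs care is confirming that parts and cross pairs are never the same set when counting.
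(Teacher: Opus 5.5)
Your proof is correct and matches the paper's argument: both read $d_d(v)$ off the classification in Lemma~\ref{lem:kpartite-mds}, counting the one part $V_l$ plus the $N-r_l$ cross pairs $\{v,w\}$ with $w\notin V_l$. Your remark on double counting is a bit muddled, since size alone does not distinguish a part from a pair when $r_j=2$. The reason you end on is the right one: a part lies inside a single $V_j$, while a cross pair meets two parts.
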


\begin{proof}
By Lemma~\ref{lem:kpartite-mds}, the minimal dominating sets of \(G\) are the
parts \(V_1, \ldots, V_k\) and the pairs \(\{u, w\}\) with \(u, w\) in
different parts.\\
A vertex \(v \in V_l\) lies in the set \(V_l\) and in the \(N - r_l\) pairs
\(\{v, w\}\) with \(w \notin V_l\). No other minimal dominating set contains
\(v\). Hence \(d_d(v) = 1 + (N - r_l) = N - r_l + 1\).
\end{proof}

\begin{theorem}\label{thm:kpartite}
Let \(G = K_{r_1, \dots, r_k}\) with \(k \ge 2\) and all \(r_i \ge 2\), and
let \(N = \sum_i r_i\). Then
\[
DRMS(G) = \sum_{1 \le l < j \le k} \frac{r_l r_j}{(N - r_l + 1)^2 + (N - r_j + 1)^2}.
\]
\end{theorem}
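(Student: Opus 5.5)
The plan is to obtain the formula directly from Lemma~\ref{lem:kpartite-dd} by partitioning the edge set of \(G\) according to which pair of parts an edge joins. Since \(G = K_{r_1,\dots,r_k}\) has no edges inside a part, and every pair of vertices in distinct parts is adjacent, we can write
\[
E(G) = \bigsqcup_{1 \le l < j \le k} E_{lj}, \qquad E_{lj} = \{uw : u \in V_l,\ w \in V_j\},
\]
a disjoint union with \(|E_{lj}| = r_l r_j\). This decomposition is immediate from Definition~\ref{def:kpartite}.

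Next, by Lemma~\ref{lem:kpartite-dd} (which relies on Lemma~\ref{lem:kpartite-mds} and the hypothesis \(r_i \ge 2\)), every vertex \(u \in V_l\) has \(d_d(u) = N - r_l + 1\), and every \(w \in V_j\) has \(d_d(w) = N - r_j + 1\). Hence the summand \(1/(d_d^2(u)+d_d^2(w))\) is the same for all edges of \(E_{lj}\), namely
\[
\frac{1}{(N-r_l+1)^2 + (N-r_j+1)^2}.
\]

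Summing first over each block \(E_{lj}\), which contributes \(r_l r_j\) times this constant, and then over all unordered pairs \(l<j\), gives exactly the stated expression for \(DRMS(G)\).

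There is no real obstacle, since all the combinatorial work sits in Lemma~\ref{lem:kpartite-mds}. The only point needing care is to note that ordering \(l<j\) counts each edge exactly once, with no factor of \(2\). As a sanity check, for \(k=2\) the formula gives \(r_1r_2/((r_2+1)^2+(r_1+1)^2)\), in agreement with Proposition~\ref{prop:kmn}.
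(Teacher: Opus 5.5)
Your proposal is correct and follows essentially the same route as the paper: it determines the domination degrees $d_d(v)=N-r_l+1$ for $v\in V_l$ and then sums over the $r_lr_j$ edges between each pair of parts $V_l,V_j$ with $l<j$. The only difference is that the paper re-derives the minimal dominating sets and degrees inline, whereas you cite Lemmas~\ref{lem:kpartite-mds} and~\ref{lem:kpartite-dd}; your $k=2$ check against Proposition~\ref{prop:kmn} is also consistent.
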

\begin{proof}
Let $V_1,\dots,V_k$ be the parts. We claim that the minimal dominating sets
are exactly the parts $V_l$ and the pairs $\{u,w\}$ with $u,w$ in different
parts.\\
\emph{Domination.} If $u\in V_i$, $w\in V_j$, $i\ne j$, then $\{u,w\}$
dominates $G$: vertices of $V_i$ are adjacent to $w$, vertices of $V_j$ to
$u$, and all other vertices to both. Each $V_l$ dominates $G$, since every
vertex outside $V_l$ is adjacent to all of $V_l$.\\
\emph{Minimality.} Removing $u\in V_l$ from $V_l$ leaves $u$ undominated.
For $\{u,w\}$ as above, $\{w\}$ does not dominate the vertices of $V_j\setminus\{w\}$
(nonempty because $r_j\ge2$), and $\{u\}$ does not dominate
$V_i\setminus\{u\}$ (because $r_i\ge2$).\\
\emph{Completeness.} Let $S$ be a minimal dominating set. If $S$ meets two
different parts, it contains a dominating pair, so $S$ equals that pair by
minimality. Otherwise $S\subseteq V_l$ for some $l$; a vertex of $V_l$ is
dominated by $S$ only if it lies in $S$ (it is not adjacent to anything in
$V_l$), hence $S\supseteq V_l$ and $S=V_l$.\\
\emph{Counting.} A vertex $v\in V_l$ lies in the set $V_l$ and in the $N-r_l$
pairs $\{v,w\}$ with $w\notin V_l$, so $d_d(v)=N-r_l+1$. There are $r_lr_j$
edges between $V_l$ and $V_j$, each contributing
$1/\bigl((N-r_l+1)^2+(N-r_j+1)^2\bigr)$.
\end{proof}

\begin{definition}\label{def:wheel}
For \(n \ge 4\), the \emph{wheel graph} \(W_n\) consists of a cycle
\(C_{n-1}\) with vertices \(v_1, \dots, v_{n-1}\) together with a central
vertex \(v_0\) adjacent to every vertex of the cycle.
\end{definition}

\begin{definition}
\label{def:tk}
For $k\ge 3$, fix a vertex $x$ of the cycle $C_k$ and define
\[
t_k = \mathrm{card}\{S \subseteq V(C_k) : S \text{ is a minimal dominating set of } C_k,\ x \in S\}.
\]
Since $C_k$ is vertex-transitive, $t_k$ does not depend on the choice of $x$.
\end{definition}

\begin{lemma}
\label{lem:wheel}
Let $n\ge 4$. For every $S\subseteq V(C_{n-1})$, the following are equivalent:
\begin{enumerate}
\item $S$ is a minimal dominating set of $W_n$ with $v_0\notin S$;
\item $S$ is a minimal dominating set of $C_{n-1}$.
\end{enumerate}
\end{lemma}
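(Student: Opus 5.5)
The plan is to compare domination in $W_n$ and in $C_{n-1}$ for sets avoiding the hub. Throughout, $S\subseteq V(C_{n-1})$ and $v_0\notin S$. The one observation that drives everything is this: for a cycle vertex $v_i$, we have $N_{W_n}[v_i]\cap S=N_{C_{n-1}}[v_i]\cap S$, because the only extra neighbour of $v_i$ in $W_n$ is $v_0$, which is not in $S$. For the hub itself, $N_{W_n}[v_0]\cap S=S$.

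\emph{Step 1: domination is equivalent.} We show that $S$ dominates $W_n$ if and only if $S$ dominates $C_{n-1}$. By the observation, the cycle vertices dominated by $S$ are the same in both graphs. It remains to handle $v_0$. In $W_n$, $v_0$ is dominated exactly when $S\neq\varnothing$. A dominating set of $C_{n-1}$ is automatically nonempty, so $v_0$ is dominated whenever $S$ dominates $C_{n-1}$. Conversely, a set dominating $W_n$ must meet every $N[v_i]$, so it dominates the cycle.

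\emph{Step 2: minimality is equivalent.} Here I would use Lemma~\ref{lem:private}, or argue directly through subsets. Both routes are simple.

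\begin{itemize}
\item Direct route: the proper subsets of $S$ also avoid $v_0$. By Step 1, a proper subset $T\subsetneq S$ dominates $W_n$ if and only if it dominates $C_{n-1}$. Hence $S$ is a minimal dominating set of one graph if and only if it is a minimal dominating set of the other.
\item Private-neighbour route: by the observation, a cycle vertex $w$ is a private neighbour of $u\in S$ in $W_n$ if and only if it is one in $C_{n-1}$. The vertex $v_0$ could also be a private neighbour of $u$ in $W_n$, but only when $S=\{u\}$. A single vertex cannot dominate $C_{n-1}$ once $n-1\ge 4$. In the cases $n-1=3$ (and the general case), the direct route handles this without any case distinction.
\end{itemize}

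Combining Steps 1 and 2 gives (1)$\Leftrightarrow$(2).

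\emph{Main obstacle.} The only delicate point is the special role of $v_0$. In the singleton case $S=\{u\}$, the hub can be the unique private neighbour of $u$ in $W_n$, with no counterpart in $C_{n-1}$. I will sidestep this by arguing minimality through proper subsets and Step 1 rather than through private neighbours. This works because every proper subset of $S$ also excludes $v_0$, so Step 1 applies to it directly.
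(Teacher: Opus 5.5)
Your proof is correct. Step 1 matches the paper's domination argument, built on the same identity $N_{W_n}[v_i]\cap S=N_{C_{n-1}}[v_i]\cap S$. The difference lies in the minimality half.

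The paper goes through Lemma~\ref{lem:private}. It shows that a cycle vertex is a private neighbour of $u\in S$ in $W_n$ exactly when it is one in $C_{n-1}$. It then treats separately the case where the hub $v_0$ is the private neighbour. That case forces $S=\{u\}$, which can only happen for $n=4$, and is settled by noting that a one-element dominating set is always minimal.

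Your direct route needs neither the lemma nor the case split. Step 1 holds for every subset of $V(C_{n-1})$, including $\varnothing$, and every proper subset of $S$ again avoids $v_0$. So the dominating sets contained in $V(C_{n-1})$ are the same for both graphs, and hence so are their minimal members. This is shorter and more robust; it is really the general fact that two graphs whose dominating sets agree among subsets of a common vertex set $U$ have the same minimal dominating sets inside $U$. The paper's route, in exchange, gives a finer local picture: it identifies the vertices that witness minimality and explains why the hub plays no role once $|S|\ge 2$.

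Your side remark on the private-neighbour route is unfinished for $n=4$, where $\{u\}$ does dominate $C_3$. Since you rely on the direct route, this does not affect correctness.
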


\begin{proof}
Let $S\subseteq V(C_{n-1})$, so $v_0\notin S$. For every cycle vertex $v_i$ we have
\begin{equation}\label{eq:nbr}
  N_{W_n}[v_i]\cap S \;=\; N_{C_{n-1}}[v_i]\cap S,
\end{equation}
because the only additional neighbour of $v_i$ in $W_n$ is $v_0\notin S$.\\
Domination.
By \eqref{eq:nbr}, a cycle vertex is dominated by $S$ in $W_n$ if and only if it is dominated by $S$ in $C_{n-1}$. The centre $v_0$ is adjacent to every vertex of $C_{n-1}$, so it is dominated by every nonempty $S\subseteq V(C_{n-1})$. Every dominating set of $C_{n-1}$ is nonempty. Hence
\[
  S \text{ dominates } W_n \text{ if and only if } S \text{ dominates } C_{n-1}.
\]
\medskip
Minimality.
Assume that $S$ dominates both graphs. By Lemma~\ref{lem:private} (a dominating set is minimal if and only if every vertex of it has a private neighbour), it suffices to compare the private neighbours of each $u\in S$ in the two graphs. Let $w\in V(W_n)$ satisfy $N_{W_n}[w]\cap S=\{u\}$.\\
\begin{itemize}
  \item If $w\in V(C_{n-1})$, then by \eqref{eq:nbr} we get $N_{C_{n-1}}[w]\cap S=\{u\}$. Thus $w$ is also a private neighbour of $u$ in $C_{n-1}$. Conversely, every private neighbour of $u$ in $C_{n-1}$ is a private neighbour of $u$ in $W_n$, again by \eqref{eq:nbr}.
  \item If $w=v_0$, then $N_{W_n}[v_0]\cap S=S$, so $S=\{u\}$. A one-element dominating set is always minimal, since $\emptyset$ is not dominating. Hence $S$ is minimal in $W_n$. Moreover, $S=\{u\}$ dominates $C_{n-1}$ and is therefore minimal there as well. (This case occurs only for $n=4$, i.e.\ $W_4=K_4$.)
\end{itemize}
Consequently, if $|S|\ge 2$, then $v_0$ is never a private neighbour, and $u\in S$ has a private neighbour in $W_n$ if and only if it has one in $C_{n-1}$. If $|S|=1$, then $S$ is minimal in both graphs. In all cases,
\[
  S \text{ is minimal in } W_n \text{ if and only if } S \text{ is minimal in } C_{n-1}.
\]
Together with the domination equivalence, this proves the lemma.
\end{proof}

\begin{theorem}
\label{thm:wheel-dd}
Let \(W_n\) be the wheel graph with centre \(v_0\) and cycle vertices
\(v_1, \ldots, v_{n-1}\). For \(n \ge 4\),
\[
d_d(v_0) = 1, \qquad d_d(v_i) = t_{n-1} \quad (1 \le i \le n-1).
\]
\end{theorem}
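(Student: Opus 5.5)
The proof splits the minimal dominating sets of $W_n$ into two classes, those containing the centre $v_0$ and those avoiding it, and counts each vertex's memberships in each class separately.

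\emph{Sets containing $v_0$.} Suppose $S$ is a minimal dominating set of $W_n$ with $v_0\in S$. Since $v_0$ is adjacent to every cycle vertex, $\{v_0\}$ alone dominates $W_n$. So if $|S|\ge2$, the set $\{v_0\}$ is a proper dominating subset, contradicting minimality. Hence the only minimal dominating set containing $v_0$ is $\{v_0\}$ itself. It is minimal because $\emptyset$ does not dominate. This gives $d_d(v_0)=1$.

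\emph{Sets avoiding $v_0$.} For a cycle vertex $v_i$, the minimal dominating sets of $W_n$ containing $v_i$ must avoid $v_0$, because the only one containing $v_0$ is $\{v_0\}$. By Lemma~\ref{lem:wheel}, the minimal dominating sets of $W_n$ avoiding $v_0$ are exactly the minimal dominating sets of $C_{n-1}$, so this correspondence is a bijection. Therefore $d_d(v_i)$ equals the number of minimal dominating sets of $C_{n-1}$ containing $v_i$. By Definition~\ref{def:tk} and vertex-transitivity of $C_{n-1}$ (here $n-1\ge3$), this number is $t_{n-1}$.

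\emph{Main obstacle.} The only delicate point is already handled by Lemma~\ref{lem:wheel}: the case $n=4$. There $W_4=K_4$, singletons on the cycle are dominating, and $v_0$ could be a private neighbour. The formula remains consistent, since $t_3=1$ for $C_3=K_3$. I will note this as a sanity check rather than treat it as a separate case, because the lemma covers it uniformly.
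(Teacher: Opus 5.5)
Your proposal is correct and follows essentially the same route as the paper. Both arguments first show that $\{v_0\}$ is the only minimal dominating set containing the centre, which forces every minimal dominating set containing a cycle vertex $v_i$ to avoid $v_0$. Lemma~\ref{lem:wheel} then identifies these sets bijectively with the minimal dominating sets of $C_{n-1}$ containing $v_i$, which number $t_{n-1}$; your sanity check $t_3=1$ for $W_4=K_4$ is also correct.
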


\begin{proof}
The set $\{v_0\}$ is the only minimal dominating set of $W_n$ containing
$v_0$, so $d_d(v_0)=1$. Let $v_i$ be a cycle vertex with $1\le i\le n-1$,
and let $S$ be a minimal dominating set of $W_n$ containing $v_i$. If
$v_0\in S$, then by minimality of $S$ we must have $S=\{v_0\}$, which
contradicts $v_i\in S$. Hence $v_0\notin S$, and by Lemma~\ref{lem:wheel}
$S$ is a minimal dominating set of $C_{n-1}$ containing $v_i$. Conversely,
every minimal dominating set of $C_{n-1}$ containing $v_i$ is a minimal
dominating set of $W_n$ containing $v_i$. Therefore the number of such sets
is exactly $t_{n-1}$.
\end{proof}

\begin{theorem}
\label{thm:wheel}
For $n\ge 4$,
\[
DRMS(W_n)
=(n-1)\left[
\frac{1}{1+t_{n-1}^2}
+\frac{1}{2t_{n-1}^2}
\right].
\]
\end{theorem}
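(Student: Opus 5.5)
The formula follows directly from Theorem~\ref{thm:wheel-dd} once the edge set of $W_n$ is split into its two natural classes. The edges of $W_n$ are the $n-1$ \emph{spokes} $v_0v_i$ ($1\le i\le n-1$) and the $n-1$ \emph{rim} edges $v_iv_{i+1}$ (indices modulo $n-1$) of the cycle $C_{n-1}$. These two classes are disjoint and together exhaust $E(W_n)$. The count is consistent, since $m=2(n-1)$; for $n=4$ we get $6=\binom42$ edges, matching $W_4=K_4$. I would write
\[
DRMS(W_n)=\sum_{i=1}^{n-1}\frac{1}{d_d^2(v_0)+d_d^2(v_i)}+\sum_{i=1}^{n-1}\frac{1}{d_d^2(v_i)+d_d^2(v_{i+1})}.
\]

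Next I substitute the values from Theorem~\ref{thm:wheel-dd}: $d_d(v_0)=1$ and $d_d(v_i)=t_{n-1}$ for every cycle vertex. Each spoke then contributes $1/(1+t_{n-1}^2)$. Each rim edge joins two cycle vertices, so it contributes $1/(2t_{n-1}^2)$. Here $t_{n-1}\ge1$ by Lemma~\ref{lem:delta1}, so both denominators are positive. Summing the $n-1$ identical terms in each class and factoring out $n-1$ gives the stated formula.

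The only real content is Theorem~\ref{thm:wheel-dd}, which rests on Lemma~\ref{lem:wheel}; both are already established. The one point that needs care is the rim count: for $n\ge4$ the cycle $C_{n-1}$ has at least $3$ vertices, so it has exactly $n-1$ distinct edges and no degenerate multi-edges. I would also check the boundary case $n=4$. There $t_3=1$, because the minimal dominating sets of $C_3=K_3$ are the singletons. The formula gives $3(1/2+1/2)=3=DRMS(K_4)$, which agrees with Proposition~\ref{prop:kn}.
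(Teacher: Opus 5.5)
Your proof is correct and is essentially the paper's argument: split $E(W_n)$ into the $n-1$ spokes and the $n-1$ rim edges, substitute $d_d(v_0)=1$ and $d_d(v_i)=t_{n-1}$ from Theorem~\ref{thm:wheel-dd}, and sum the two classes. Your extra check at $n=4$ ($t_3=1$, giving $DRMS(K_4)=3$) is a useful consistency test but is not needed for the proof.
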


\begin{proof}
The wheel $W_n$ has $2(n-1)$ edges: $n-1$ spokes joining $v_0$ to the cycle
vertices, and $n-1$ edges of the cycle $C_{n-1}$. Using the domination
degrees from Theorem~\ref{thm:wheel-dd},
\[
DRMS(W_n)
=(n-1)\cdot\frac{1}{1+t_{n-1}^2}
+(n-1)\cdot\frac{1}{2t_{n-1}^2}.
\]
\end{proof}

\begin{example}
For $W_6$, we have $t_5=2$, hence $d_d(v_0)=1$ and $d_d(v_i)=2$ for
$1\le i\le 5$, and
\[
DRMS(W_6)=5\left(\frac15+\frac18\right)=\frac{13}{8}=1.625.
\]
\end{example}

\begin{definition}\label{def:book}
For \(r \ge 2\), the \emph{book graph} \(B_r\) consists of two adjacent
\emph{hub} vertices \(a, b\), together with \(2r\) \emph{page} vertices
\(p_1, \ldots, p_r\) and \(p'_1, \ldots, p'_r\), where \(p_i\) is adjacent
to \(a\), \(p'_i\) is adjacent to \(b\), and \(p_i\) is adjacent to \(p'_i\)
for each \(i\). The graph has \(2(r+1)\) vertices and \(3r+1\) edges.
\end{definition}

\begin{lemma}\label{lem:book-mds}
\(B_r\) has exactly \(2^r + 3\) minimal dominating sets, namely
\[
\{a,b\}; \quad \{a,p_1,\dots,p_r\}; \quad \{a,p_1',\dots,p_r'\};
\]
\[
\{b,p_1,\dots,p_r\}; \quad \{b,p_1',\dots,p_r'\};
\]
and the \(2^r - 2\) sets
\[
S_J = \{p_i : i \in J\} \cup \{p_i' : i \notin J\},
\qquad \varnothing \neq J \subsetneq [r].
\]
\end{lemma}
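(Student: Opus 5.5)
Each listed set will be shown to be a minimal dominating set by using Lemma~\ref{lem:private}. It is convenient to view $B_r$ as the hub edge $ab$ together with the $r$ ``rungs'' $p_i p_i'$. Within each rung, $p_i$ can be dominated only by $a$, $p_i$ or $p_i'$, and $p_i'$ only by $b$, $p_i'$ or $p_i$.

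\textbf{Verifying the listed sets.} For $\{a,b\}$, $a$ dominates every $p_i$ and $b$ every $p_i'$. The private neighbours are $p_1$ for $a$ and $p_1'$ for $b$. For $\{a,p_1,\dots,p_r\}$, every $p_i'$ is dominated by $p_i$ and $b$ by $a$. The private neighbour of $a$ is $b$, since $N[b]=\{a,b,p_1',\dots,p_r'\}$ meets the set only in $a$. The private neighbour of $p_i$ is $p_i'$. The other three mixed sets are handled symmetrically.

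For $S_J$, each rung contains exactly one chosen vertex, so every page vertex is dominated. Since $J\neq\varnothing$, some $p_i\in S_J$ dominates $a$, and since $J\neq[r]$, some $p_j'\in S_J$ dominates $b$. The private neighbour of $p_i$ is its partner $p_i'$, whose closed neighbourhood $\{b,p_i',p_i\}$ meets $S_J$ only in $p_i$; the same argument applies to $p_i'$.

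\textbf{Completeness.} Let $S$ be a minimal dominating set. I split into cases according to $S\cap\{a,b\}$.
\begin{itemize}
\item If $a,b\in S$, then $\{a,b\}$ already dominates, so minimality forces $S=\{a,b\}$.
\item If $a\in S$ and $b\notin S$, every $p_i'$ must be dominated by $p_i$ or $p_i'$, so $S$ contains at least one vertex of each rung. Suppose $S$ contains both $p_i$ and $p_i'$ for some $i$. Then $p_i$ has no private neighbour: $p_i$ is seen by $a$, $p_i'$ by itself, and $a$ by $a$. Hence exactly one vertex per rung lies in $S$. Now suppose the choices are mixed, with some $p_i\in S$ and some $p_j'\in S$. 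The only vertex that could be a private neighbour of $a$ is $b$, because each $p_k$ is dominated within its own rung. But $b$ is dominated by $p_j'$, so $a$ has no private neighbour, a contradiction. Hence all choices are $p$'s or all are $p'$'s, giving $\{a,p_1,\dots,p_r\}$ or $\{a,p_1',\dots,p_r'\}$.
\item The case $b\in S$, $a\notin S$ is symmetric.
\item If $a,b\notin S$, each rung needs a chosen vertex, because $p_i$ must be dominated by $p_i$ or $p_i'$. As above, taking both vertices of a rung destroys privacy unless they are needed to dominate $a$ or $b$. Here the danger is the configuration where $p_i$ is the only $p$-vertex and is needed for $a$; this is the one step I expect to be the main obstacle. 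I would handle it by noting that the partner $p_i'$ is then redundant unless it is the unique dominator of $b$. If both $p_i,p_i'\in S$ while every other rung contributes one vertex, one must check whether $S$ is minimal. For $r\ge2$, take another rung $j\ne i$. Its chosen vertex dominates $a$ if it is $p_j$ and $b$ if it is $p_j'$. So one of $p_i,p_i'$ is not the unique dominator of its hub, and it loses privacy. I need to verify this carefully, since the hubs are the only vertices shared across rungs. Once exactly one vertex per rung is established, $S=S_J$ for $J=\{i:p_i\in S\}$. Domination of $a$ and $b$ forces $J\neq\varnothing$ and $J\neq[r]$.
\end{itemize}

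\textbf{Counting.} The list has $1+4+(2^r-2)=2^r+3$ sets, and they are pairwise distinct.
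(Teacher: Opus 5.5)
Your proof is correct and follows essentially the paper's route: the same case split on $S\cap\{a,b\}$ using Lemma~\ref{lem:private}, and in the last case the same observation that a doubled rung $i$ leaves $a$ (resp.\ $b$) as the only possible private neighbour of $p_i$ (resp.\ $p_i'$), which any occupied rung $j\neq i$ destroys. Your argument there is therefore complete without the restriction that the other rungs contribute exactly one vertex. One small fix: $\{a,p_1',\dots,p_r'\}$ and $\{b,p_1,\dots,p_r\}$ are not images of $\{a,p_1,\dots,p_r\}$ under any automorphism, so ``symmetrically'' does not cover them; check them directly by noting that every vertex is its own private neighbour, as the paper does.
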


\begin{proof}
Note \(N[a] = \{a, b, p_1, \dots, p_r\}\), \(N[b] = \{b, a, p_1', \dots, p_r'\}\),
\(N[p_i] = \{p_i, a, p_i'\}\), \(N[p_i'] = \{p_i', b, p_i\}\). Let \(S\) be
minimal dominating.

\emph{Case \(a, b \in S\).} \(\{a,b\}\) already dominates \(G\), so
\(S = \{a,b\}\); it is minimal since \(\{a\}\) misses \(p_i'\) and \(\{b\}\)
misses \(p_i\).

\emph{Case \(a \in S\), \(b \notin S\).} Every \(p_i'\) must be dominated, so
\(S \cap \{p_i, p_i'\} \neq \varnothing\) for all \(i\). A private neighbour of
\(a\) lies in \(N[a]\): the vertex \(a\) itself works if and only if \(S\) contains no
\(p_i\); the vertex \(b\) works if and only if \(S\) contains no \(p_i'\); a vertex
\(p_i\) never works since \(S \cap \{p_i, p_i'\} \neq \varnothing\) and
\(\{p_i, p_i'\} \subseteq N[p_i]\) with \(a \in S\). Hence either no
\(p_i \in S\), forcing \(S \supseteq \{a, p_1', \dots, p_r'\}\), or no
\(p_i' \in S\), forcing \(S \supseteq \{a, p_1, \dots, p_r\}\). These sets are
dominating, so \(S\) equals one of them, and both are minimal: in
\(\{a, p_1', \dots, p_r'\}\) the vertex \(a\) is its own private neighbour and
each \(p_i'\) is its own private neighbour; in \(\{a, p_1, \dots, p_r\}\) the
vertex \(a\) has the private neighbour \(b\) and \(p_i\) has the private
neighbour \(p_i'\).

\emph{Case \(b \in S\), \(a \notin S\).} Symmetric: two sets.

\emph{Case \(a, b \notin S\).} To dominate \(p_i\) and \(p_i'\) we need
\(S \cap \{p_i, p_i'\} \neq \varnothing\) for every \(i\). Suppose
\(p_i, p_i' \in S\) for some \(i\). A private neighbour of \(p_i\) in
\(N[p_i] = \{p_i, a, p_i'\}\) can only be \(a\) (the others are dominated by
\(p_i'\)), which requires \(p_i\) to be the only \(p\) in \(S\); likewise
\(p_i'\) needs \(b\), i.e.\ \(p_i'\) is the only \(p'\) in \(S\). Then for
\(j \neq i\) (such \(j\) exists as \(r \geq 2\)) neither \(p_j\) nor
\(p_j'\) lies in \(S\), a contradiction. So exactly one of \(p_i, p_i'\) lies
in \(S\) for each \(i\): \(S = S_J\). Domination of \(a\) and \(b\) requires
\(J \neq \varnothing\) and \(J \neq [r]\). Conversely each such \(S_J\) is
dominating and minimal (\(p_i\) has the private neighbour \(p_i'\) and vice
versa).
\end{proof}

\begin{lemma}\label{lem:book-dd}
For every \(r \geq 2\),
\[
d_d(a) = d_d(b) = 3, \qquad d_d(p_i) = d_d(p_i') = 2^{r-1} + 1 =: q.
\]
\end{lemma}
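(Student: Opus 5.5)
The plan is to read the domination degrees directly off the complete list of minimal dominating sets in Lemma~\ref{lem:book-mds}. That lemma gives exactly \(2^r+3\) sets, so each \(d_d(v)\) is just the number of listed sets that contain \(v\). No further structural argument is needed; the work is a careful incidence count over the three families in the list.

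\emph{Hub vertices.} The vertex \(a\) occurs in \(\{a,b\}\), \(\{a,p_1,\dots,p_r\}\) and \(\{a,p_1',\dots,p_r'\}\). It occurs in none of the sets \(\{b,\dots\}\), and in none of the \(S_J\), since those consist only of page vertices. Hence \(d_d(a)=3\). The map \(a\leftrightarrow b\), \(p_i\leftrightarrow p_i'\) is an automorphism of \(B_r\), and it permutes the listed sets. So \(d_d(b)=3\) as well.

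\emph{Page vertices.} Fix \(i\). Among the five "named" sets, \(p_i\) lies in exactly two: \(\{a,p_1,\dots,p_r\}\) and \(\{b,p_1,\dots,p_r\}\). Among the sets \(S_J\), \(p_i\in S_J\) exactly when \(i\in J\), with \(\varnothing\ne J\subsetneq[r]\). The subsets of \([r]\) containing \(i\) number \(2^{r-1}\). Exactly one of them, \(J=[r]\), is excluded, while \(J=\varnothing\) does not contain \(i\) and so excludes nothing. This leaves \(2^{r-1}-1\) admissible sets. Therefore
\[
d_d(p_i)=2+(2^{r-1}-1)=2^{r-1}+1=q.
\]
Applying the same automorphism gives \(d_d(p_i')=q\): the condition becomes \(i\notin J\), and among the \(2^{r-1}\) such subsets only \(J=\varnothing\) is excluded.

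The only place to be careful is the boundary bookkeeping for \(J\). It is easy to subtract \(2\) instead of \(1\) by removing both \(\varnothing\) and \([r]\), when only one of them is relevant for a given vertex. A quick sanity check at \(r=2\) gives \(q=3\), which agrees with a direct enumeration of the \(2^2+3=7\) sets.
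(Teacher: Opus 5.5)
Your proposal is correct and follows the paper's proof: you read the domination degrees off the list in Lemma~\ref{lem:book-mds}, counting $a$ in three sets and $p_i$ in two named sets plus the $2^{r-1}-1$ sets $S_J$ with $i\in J\neq[r]$, and you get $b$ and $p_i'$ from the symmetry $a\leftrightarrow b$, $p_i\leftrightarrow p_i'$. Your explicit check that this map is an automorphism (it sends $S_J$ to $S_{[r]\setminus J}$) only makes precise the paper's ``by symmetry''.
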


\begin{proof}
By Lemma~\ref{lem:book-mds}, \(a\) lies in \(\{a,b\}\) and in the two sets of
the second case: \(d_d(a) = 3\); same for \(b\). A vertex \(p_i\) lies in one
set of the second case, one of the third, and in \(S_J\) for \(i \in J \neq [r]\),
i.e.\ \(2^{r-1} - 1\) sets: \(d_d(p_i) = 2^{r-1} + 1\). By symmetry,
\(d_d(p_i') = 2^{r-1} + 1\).
\end{proof}

\begin{theorem}\label{thm:book}
For every \(r \geq 2\),
\[
DRMS(B_r) = \frac{1}{18} + \frac{2r}{9 + q^2} + \frac{r}{2q^2},
\qquad q = 2^{r-1} + 1.
\]
\end{theorem}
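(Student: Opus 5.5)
The proof will be a direct summation over the edge set of \(B_r\), using the domination degrees already computed in Lemma~\ref{lem:book-dd}. By Definition~\ref{def:book}, \(B_r\) has \(3r+1\) edges, and I will split them into three classes according to the domination degrees of their endpoints.
\begin{itemize}
\item The \emph{hub edge} \(ab\).
\item The \(2r\) \emph{spoke edges} \(ap_i\) and \(bp_i'\) for \(1\le i\le r\).
\item The \(r\) \emph{rung edges} \(p_ip_i'\) for \(1\le i\le r\).
\end{itemize}
These classes are pairwise disjoint and together contain \(1+2r+r=3r+1\) edges, so they exhaust \(E(B_r)\).

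Lemma~\ref{lem:book-dd} gives \(d_d(a)=d_d(b)=3\) and \(d_d(p_i)=d_d(p_i')=q=2^{r-1}+1\). Each edge class therefore contributes a constant summand.
\begin{itemize}
\item The hub edge contributes \(1/(3^2+3^2)=1/18\).
\item Each spoke edge joins a hub to a page vertex, so it contributes \(1/(9+q^2)\). The spokes together give \(2r/(9+q^2)\).
\item Each rung edge joins two page vertices, so it contributes \(1/(2q^2)\). The rungs together give \(r/(2q^2)\).
\end{itemize}
Adding the three contributions gives the claimed formula.

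The substantive work, namely enumerating the minimal dominating sets and counting how many contain each vertex, is already done in Lemmas~\ref{lem:book-mds} and~\ref{lem:book-dd}. What remains is bookkeeping, and the only point needing care is confirming the edge count and the partition against Definition~\ref{def:book}. If I were checking Lemma~\ref{lem:book-dd} itself, the delicate step would be the count for \(p_i\). The vertex \(p_i\) lies in \(\{a,p_1,\dots,p_r\}\), in \(\{b,p_1,\dots,p_r\}\), and in the sets \(S_J\) with \(i\in J\). There are \(2^{r-1}\) such \(J\), and excluding \(J=[r]\) leaves \(2^{r-1}-1\). This gives \(2+2^{r-1}-1=q\).

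As a sanity check, take \(r=2\), so \(q=3\). Then every vertex of \(B_2\) has domination degree \(3\), so \(B_2\) is \(3\)-domination-regular. The formula gives \(1/18+4/18+2/18=7/18\). This agrees with \(m/(2k^2)=7/18\) from Theorem~\ref{thm:elementary}, with \(m=7\) and \(k=3\).
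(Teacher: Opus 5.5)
Your proposal is correct and follows the same route as the paper's proof. Both split the $3r+1$ edges into the hub edge $ab$, the $2r$ hub--page edges and the $r$ edges $p_ip_i'$, take the constant weights $1/18$, $1/(9+q^2)$ and $1/(2q^2)$ from Lemma~\ref{lem:book-dd}, and add them; your recount of $d_d(p_i)=q$ and your $r=2$ check (giving $7/18$) are both consistent with this.
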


\begin{proof}
By Lemma~\ref{lem:book-dd}, the edge classes are \(ab\) (weight \(1/18\)),
\(2r\) hub--page edges (weight \(1/(9 + q^2)\)) and \(r\) edges
\(p_i p_i'\) (weight \(1/(2q^2)\)). Summing over all edges,
\[
DRMS(B_r) = \frac{1}{18} + \frac{2r}{9 + q^2} + \frac{r}{2q^2}.
\]
\end{proof}

\begin{definition}\label{def:butterfly}
For \(r, s \ge 1\), the \emph{butterfly graph} \(B_{r,s}\) is formed by
taking a central vertex \(c\), attaching to it \(r\) triangles
\(c x_i y_i c\) that are pairwise disjoint except for \(c\), and attaching
to it \(s\) pendant vertices \(z_1, \ldots, z_s\). The graph has
\(2r + s + 1\) vertices and \(3r + s\) edges.
\end{definition}

\begin{theorem}\label{thm:butterfly}
The minimal dominating sets of $B_{r,s}$ are $\{c\}$ and the $2^r$ sets
consisting of $\{z_1,\dots,z_s\}$ together with exactly one of $x_i,y_i$ for
each $i$. Therefore $d_d(c)=1$, $d_d(x_i)=d_d(y_i)=2^{r-1}$, $d_d(z_j)=2^r$ and
\[
DRMS(B_{r,s})=\frac{s}{1+4^{r}}+\frac{2r}{1+4^{r-1}}+\frac{r}{2\cdot4^{r-1}}.
\]
\end{theorem}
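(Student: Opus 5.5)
The plan is to classify minimal dominating sets $S$ of $B_{r,s}$ according to whether the centre $c$ belongs to $S$, using Lemma~\ref{lem:private} to test minimality. After that, I read off the domination degrees and sum over the three edge classes. The closed neighbourhoods are
\[
N[c]=V(B_{r,s}),\qquad N[x_i]=\{x_i,y_i,c\},\qquad N[y_i]=\{x_i,y_i,c\},\qquad N[z_j]=\{z_j,c\}.
\]

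\emph{Case $c\in S$.} Since $\{c\}$ already dominates $B_{r,s}$, minimality forces $S=\{c\}$. This set is minimal, because $\varnothing$ is not dominating.

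\emph{Case $c\notin S$.} Each pendant vertex $z_j$ has $N[z_j]\cap S\subseteq\{z_j\}$, so every $z_j$ lies in $S$. For each triangle, $x_i$ and $y_i$ can only be dominated from $\{x_i,y_i\}$, so $S\cap\{x_i,y_i\}\neq\varnothing$. The centre $c$ is dominated by $z_1$, and this is where $s\ge1$ is used.

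The step needing care is excluding $\{x_i,y_i\}\subseteq S$. If both lie in $S$, then $x_i$ has no private neighbour: $x_i$ and $y_i$ are each dominated by both of them, and $c$ is dominated by $z_1$ as well. This contradicts Lemma~\ref{lem:private}. Hence $S$ contains $\{z_1,\dots,z_s\}$ together with exactly one of $x_i,y_i$ for each $i$, and by minimality nothing else. Conversely, every such set dominates, as the argument above shows. It is also minimal: each $z_j$ is its own private neighbour, and the chosen vertex of the $i$-th triangle privately dominates itself and its partner. This gives exactly $2^r$ sets of this type.

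\emph{Counting.} Only $\{c\}$ contains $c$, so $d_d(c)=1$. Every $z_j$ lies in all $2^r$ sets of the second type, so $d_d(z_j)=2^r$. A fixed $x_i$ lies in those sets choosing $x_i$ in triangle $i$, with free choices elsewhere, so $d_d(x_i)=2^{r-1}$; the same holds for $y_i$.

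\emph{Summation.} The edges split into three classes:
\begin{itemize}
\item $s$ edges $cz_j$, each contributing $1/(1+4^r)$;
\item $2r$ edges $cx_i$ and $cy_i$, each contributing $1/(1+4^{r-1})$;
\item $r$ edges $x_iy_i$, each contributing $1/(2\cdot4^{r-1})$.
\end{itemize}
Adding these gives the stated formula. The only nonroutine point is the private-neighbour exclusion of both triangle vertices, and it relies on $s\ge1$ so that $c$ is dominated without help from the triangles.
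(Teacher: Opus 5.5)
Your proposal is correct and follows the paper's proof essentially step for step: you split on whether $c\in S$, force every $z_j$ into $S$, rule out $\{x_i,y_i\}\subseteq S$ via Lemma~\ref{lem:private}, then count memberships and sum over the three edge classes. One small over-attribution: in the exclusion step $c$ is already dominated by $y_i$ itself (indeed $N[c]\cap S\supseteq\{x_i,y_i\}$), so that step does not depend on $s\ge1$.
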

\begin{proof}
If $c\in S$ then $S=\{c\}$ because $\{c\}$ dominates $G$ (and it is minimal,
since $z_1$ is not dominated without $c$ or $z_1$). Let $c\notin S$. Since
$N[z_j]=\{z_j,c\}$, all $z_j\in S$ (this also dominates $c$, as $s\ge1$). For
each triangle, $S\cap\{x_i,y_i\}\ne\varnothing$. If both $x_i,y_i\in S$, then
$x_i$ has no private neighbour (its closed neighbourhood $\{x_i,y_i,c\}$ is
dominated also by $y_i$ or $z_1$), contradicting Lemma~\ref{lem:private}. So
exactly one is chosen. Conversely, such a set is dominating, $z_j$ is its own
private neighbour and $x_i$ (resp.\ $y_i$) has the private neighbour $y_i$
(resp.\ $x_i$). Counting: $c$ lies in one set; each $z_j$ in all $2^r$ sets of
the second type; each $x_i$ in half of them. The edge classes are $s$ edges
$cz_j$ with $x_e=1+4^r$, $2r$ edges $cx_i,cy_i$ with $x_e=1+4^{r-1}$, and $r$
edges $x_iy_i$ with $x_e=2\cdot4^{r-1}$.
\end{proof}

\section{$DRMS$ for the Corona Product}
In this final section, we study the corona product \(G \circ F\) and derive
closed-form expressions for \(DRMS\) in two important cases: the corona with a
complete graph \(K_{n_2}\), and the corona with an empty graph
\(\overline{K_{n_2}}\). In both cases, the graph turns out to be
domination-regular, which allows us to express \(DRMS\) directly in terms of
the number of edges.
\begin{definition}\label{def:corona}
Let \(G\) be a graph of order \(n_1\) and size \(m_1\), and let \(F\) be any
graph. The \emph{corona} \(G \circ F\) is obtained by taking, for each vertex
\(v \in V(G)\), one copy \(F^v\) of \(F\), with the copies being pairwise
disjoint, and joining \(v\) to every vertex of \(F^v\). The resulting graph has
\[
|V(G \circ F)| = n_1 (1 + |V(F)|), \qquad
|E(G \circ F)| = m_1 + n_1 \cdot |E(F)| + n_1 \cdot |V(F)|.
\]
\end{definition}

\begin{theorem}\label{thm:corona}
Let $G$ be connected, $n_2\ge1$.
\begin{enumerate}
\item $G\circ K_{n_2}$ has exactly $(n_2+1)^{n_1}$ minimal dominating sets and
is $k$-domination-regular with $k=(n_2+1)^{n_1-1}$. It has
$m_1+n_1n_2(n_2+1)/2$ edges, so
\[
DRMS(G\circ K_{n_2})=
\frac{m_1+n_1n_2(n_2+1)/2}{2\,(n_2+1)^{2(n_1-1)}}.
\]
\item $G\circ\overline{K_{n_2}}$ has exactly $2^{n_1}$ minimal dominating sets
and is $2^{n_1-1}$-domination-regular, with $m_1+n_1n_2$ edges:
\[
DRMS(G\circ\overline{K_{n_2}})=\frac{m_1+n_1n_2}{2^{\,2n_1-1}}.
\]
\end{enumerate}
\end{theorem}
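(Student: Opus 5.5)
The plan is to classify the minimal dominating sets of $G\circ F$ completely, for both choices of $F$, and then count. For each $v\in V(G)$ write $B_v=\{v\}\cup V(F^v)$ for the \emph{bundle} of $v$; the bundles partition $V(G\circ F)$. The structural fact used throughout is that a copy vertex $w\in F^v$ has $N[w]\subseteq B_v$. Consequently:
\begin{itemize}
\item every dominating set meets every bundle;
\item any private neighbour of a copy vertex $w\in S$ (in the sense of Lemma~\ref{lem:private}) lies inside $B_v$.
\end{itemize}
The edges of $G$ between different bundles only ever add domination. They never have to be used in the argument, because the obstruction to minimality will always be produced by a copy vertex.

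\emph{Case $F=K_{n_2}$.} Here $N[w]=B_v$ for every $w\in F^v$, and also $B_v\subseteq N[v]$. So every vertex of $B_v$ dominates all of $B_v$.
\begin{enumerate}
\item Suppose a minimal dominating set $S$ has $|S\cap B_v|\ge2$. Then $S\cap B_v$ contains some copy vertex $w$ together with another vertex $u$. Every vertex of $N[w]=B_v$ is dominated by $u$, so $w$ has no private neighbour, contradicting Lemma~\ref{lem:private}. Hence $|S\cap B_v|=1$ for every $v$.
\item Conversely, let $S$ pick exactly one vertex from each bundle. Then $S$ dominates, since each chosen vertex dominates its bundle. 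It is also minimal. If $S\cap B_v=\{v\}$, any $w\in F^v$ (which exists since $n_2\ge1$) is a private neighbour of $v$, because $N[w]\cap S=B_v\cap S=\{v\}$. If $S\cap B_v=\{w\}$ with $w$ a copy vertex, then $w$ is its own private neighbour.
\item This gives $(n_2+1)^{n_1}$ minimal dominating sets. Each vertex lies in exactly $(n_2+1)^{n_1-1}$ of them, since its bundle choice is fixed and the other $n_1-1$ choices are free. So the graph is $k$-domination-regular with $k=(n_2+1)^{n_1-1}$.
\item The edge count is $m_1+n_1\binom{n_2}{2}+n_1n_2=m_1+n_1n_2(n_2+1)/2$. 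The corollary to Theorem~\ref{thm:elementary}, $DRMS=m/(2k^2)$, then gives the formula.
\end{enumerate}

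\emph{Case $F=\overline{K_{n_2}}$.} Now each copy vertex $w\in F^v$ is a pendant vertex with $N[w]=\{w,v\}$. Let $S$ be a minimal dominating set.
\begin{enumerate}
\item If $v\notin S$, every $w\in F^v$ must lie in $S$, since otherwise $w$ is undominated.
\item If $v\in S$ and some $w\in F^v$ also lies in $S$, then both members of $N[w]=\{w,v\}$ are dominated by $v$. So $w$ has no private neighbour, contradicting Lemma~\ref{lem:private}. Thus $S\cap B_v\in\{\{v\},V(F^v)\}$ for each $v$.
\item Conversely, every such choice gives a dominating set, since $v$ dominates $B_v$ and so does $F^v$ when $n_2\ge1$. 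It is minimal. A chosen center $v$ has any of its pendant vertices as a private neighbour. A chosen pendant $w$ is its own private neighbour, because $v\notin S$ and pendants are pairwise non-adjacent.
\item Hence there are $2^{n_1}$ minimal dominating sets, every vertex lies in $2^{n_1-1}$ of them, and $m=m_1+n_1n_2$. Therefore $DRMS=m/(2\cdot4^{n_1-1})=(m_1+n_1n_2)/2^{2n_1-1}$.
\end{enumerate}

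The main obstacle is the minimality/completeness step: showing that $S$ restricted to each bundle is forced to have the stated form even though the vertices of $G$ are adjacent across bundles. I would handle this uniformly by always exhibiting a copy vertex $w\in S$ whose whole closed neighbourhood, which lies inside its bundle, is already dominated by another vertex of $S$. This avoids any analysis of private neighbours of the centres in other bundles.
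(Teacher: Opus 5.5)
Your proposal is correct and follows essentially the same route as the paper. Both partition the vertex set into the bundles $B_v$ and apply Lemma~\ref{lem:private} to a copy vertex, which forces either exactly one vertex per bundle or $S\cap B_v\in\{\{v\},V(F^v)\}$; both then confirm minimality of the converse sets with explicit private neighbours, count, and apply $DRMS=m/(2k^2)$.
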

\begin{proof}
(1) For $v\in V(G)$ let $B_v=\{v\}\cup V(F^v)$; it is a clique with
$n_2+1\ge2$ vertices, and the sets $B_v$ partition $V(H)$, $H=G\circ K_{n_2}$.
Let $S$ be minimal dominating. A vertex $t\in V(F^v)$ has $N[t]=B_v$, so
$S\cap B_v\ne\varnothing$. If $|S\cap B_v|\ge2$, then $S\cap B_v$ contains a
non-centre $t\in F^v$, and $t$ has no private neighbour ($N[t]=B_v$ contains
another element of $S$) -- contradicting Lemma~\ref{lem:private}. Hence
$|S\cap B_v|=1$ for all $v$. Conversely, any choice of one vertex per block is
dominating, and it is minimal: a chosen $t\in F^v$ is its own private
neighbour, while a chosen $v$ has any vertex of $F^v$ as a private neighbour
($N[t]=B_v$ meets $S$ only in $v$). Thus there are $(n_2+1)^{n_1}$ minimal
dominating sets, and a vertex of $B_v$ lies in those that choose it:
$(n_2+1)^{n_1-1}$ sets. The edge count is $m_1+n_1n_2+n_1\binom{n_2}{2}$.

(2) Let $P_v$ be the $n_2$ pendant vertices at $v$. Since $N[p]=\{p,v\}$ for
$p\in P_v$, we have $S\cap(\{v\}\cup P_v)\ne\varnothing$. If $v\in S$ and some
$p\in S$, then $p$ has no private neighbour; so $v\in S\Rightarrow
S\cap P_v=\varnothing$, and $v\notin S\Rightarrow P_v\subseteq S$. Thus $S$ is
determined by $A=S\cap V(G)$, namely $S_A=A\cup\bigcup_{v\notin A}P_v$, and
every $A\subseteq V(G)$ gives a dominating set. It is minimal: $p\in P_v$ with
$v\notin A$ is its own private neighbour, and $v\in A$ has any $p\in P_v$ as a
private neighbour. Hence there are $2^{n_1}$ minimal dominating sets and each
vertex lies in $2^{n_1-1}$ of them.
\end{proof}

\section{Conclusion}

In this paper we have introduced and studied the \emph{Domination Reciprocal
Mean Square index} \(DRMS\), a new domination-degree-based topological
descriptor. Our contributions can be summarized as follows.

\begin{enumerate}
\item \emph{Bounds.} We established elementary bounds in terms of \(m\),
\(\delta_d\), and \(\Delta_d\), and characterized the equality cases. We then
derived a one-parameter family of lower bounds via the power mean \(M_p(x)\),
which unifies several classical inequalities and produces new intermediate
bounds. Using Cauchy--Schwarz, H\"older, Kantorovich, and chord inequalities,
we obtained sharp bounds in terms of the modified forgotten domination index,
the second domination Zagreb index, the first domination Banhatti index, the
domination Sombor index, and the domination elliptic Sombor index.

\item \emph{Closed forms.} We computed exact expressions for \(DRMS\) on
several standard graph families and on the corona product of two graphs. In the corona cases, the graph is domination-regular, so \(DRMS\) admits a simple expression in terms of the number of edges and the common domination degree; for the standard families, the expressions depend on the domination degrees of the vertices in each part or class.
\end{enumerate}
Several questions remain open. In particular, it would be of interest to
determine the minimum of \(DRMS\) among connected graphs (or trees) of a given
order, to strengthen the relation between \(DRMS\) and the other domination topological indices. We leave these problems for future work.

\end{document}